\theoremstyle{definition}
\newtheorem{theorem}{Theorem}[section]
\newtheorem{proposition}[theorem]{Proposition}
\newtheorem{lemma}[theorem]{Lemma}
\DeclareMathOperator{\nnz}{nnz}
\DeclareMathOperator{\piv}{piv}
\DeclareMathOperator{\img}{im}
\newcommand{\II}{\mathbb{I}}
\newcommand{\Reduce}{\textsc{Reduce}}
\newcommand{\MakeUpperTriangular}{\textsc{MakeUpperTriangular}}
\title{Accelerating Iterated Persistent Homology Computations with Warm Starts}
\author{
 Yuan Luo \thanks{Two authors contributed equally to this work.} \thanks{Corresponding author}\\
	Department of Mathematics\\
	University of California, Davis\\
	Davis, CA 95616\\
	\texttt{luoyuan9809@gmail.com}\\
\And

Bradley J. Nelson\footnotemark[1] \footnotemark[2] \\
	Department of Statistics\\
	University of Chicago\\
	Chicago, IL 60637\\
	\texttt{bradnelson@uchicago.edu} \\
}
\begin{document}
\maketitle

\begin{abstract}
Persistent homology is a topological feature used in a variety of applications such as generating features for data analysis and penalizing optimization problems.  We develop an approach to accelerate persistent homology computations performed on many similar filtered topological spaces which is based on updating associated matrix factorizations. Our approach improves the update scheme of Cohen-Steiner, Edelsbrunner, and Morozov for permutations by additionally handling addition and deletion of cells in a filtered topological space and by processing changes in a single batch.  We show that the complexity of our scheme scales with the number of elementary changes to the filtration which as a result is often less expensive than the full persistent homology computation.   
Finally, we perform computational experiments demonstrating practical speedups in several situations including feature generation and optimization guided by persistent homology.
\end{abstract}

\keywords{
Computational topology; Persistent homology; Topological data analysis
}

\section{Introduction}\label{sec:introduction}

Persistent homology is an algebraic invariant of filtered topological spaces commonly used in topological data analysis and in other areas of applied and computational topology.  At its core, persistent homology is typically computed using factorizations of the boundary matrices obtained from applying the chain functor (with field coefficients) to a finite cell complex \cite{ZCComputingPH2005}.  A variety of improvements and optimizations to this algorithm have been developed \cite{desilvaDualitiesPersistentCo2011, chenPersistentHomologyComputation2011,mischaikowMorseTheoryFiltrations2013,bauerClearCompressComputing2014,GUDHI15,otterRoadmapComputationPersistent2017} along with efficient implementations \cite{GUDHI15, Ripser19, Eirene16} which have allowed for the computation of persistent homology of increasingly large filtrations.  However, a variety of problems require not just the computation of persistent homology of a single large filtration but of many related filtrations - examples include feature generation for data in machine learning tasks \cite{carlssonTopologyData2009,giustiTwoCompanyThree2016,cangIntegrationElementSpecific2018,gaoRepositioning8565Existing2020} as well as in continuous optimization problems with persistent homology included in the objective \cite{chenTopologicalRegularizerClassifiers2018,rickardCNN2019, leygonieFrameworkDifferentialCalculus2019,topologyLayerMachine2020, carrierePersLayNeuralNetwork2020,kimEfficientTopologicalLayer2020}. In this work, we develop an update scheme for computing persistent homology which updates the computation for a related problem with a warm-start and this scheme can be used efficiently in applications which require iterated computations.

\paragraph{Background on Persistent Homology} We provide a brief introduction to the necessary building blocks from algebraic topology to describe our algorithms.  For a more complete introduction to computational topology and persistent homology, we refer to \cite{edelsbrunnerHarerBook2010, otterRoadmapComputationPersistent2017}.  A cell complex $\mathcal{X}$ is a collection of contractible cells of varying dimensions in which $q$-dimensional cells are connected to $(q-1)$-dimensional cells with maps on their boundaries.  For simplicity, one may consider simplicial or cubical complexes where these boundary maps are determined combinatorially.  Furthermore, we will only consider finite cell complexes. 
\emph{Homology} (with field coefficients) in dimension $q$ is a functor from a topological category to the category of vector spaces over a field $k$.  The homological dimension $q$ captures information about $q$-dimensional features: $q=0$ encodes connected components, $q=1$ encodes loops, and general $q$ encodes $q$-dimensional voids. 

A filtration, or filtered cell complex, is a sequence of cell complexes related by inclusion
\begin{equation}\label{eq:filtration}
    \mathcal{X}_0 \subseteq \mathcal{X}_1 \subseteq \dots
\end{equation}\emph{Persistent homology} is the application of homology with coefficients in a field $k$ to the filtration in \Cref{eq:filtration}. The result can be considered as a $k[T]$ module \cite{ZCComputingPH2005} where the grading by $T$ contains information about the filtration index, or as a diagram of vector spaces connected by linear maps induced by inclusion known as a type-A quiver representation \cite{ZZtheory2010,Oudot}.  Both representations are characterized up to isomorphism by persistence barcodes which are multisets of pairs $\{(b_i,d_i)\}_{i\ge 0}$ that describe the birth and death of homological features in the filtration.

\paragraph{Computing Persistent Homology} Persistent Homology is computed by first applying the cellular chain functor to cell complexes.  A chain complex $C_\ast(\mathcal{X})$ consists of vector spaces $C_q(\mathcal{X})$, $q=0,1,\dots$ with a basis element for each $q$-dimensional cell, and maps 
\begin{equation}\label{eq:chain_boundary}
    D_q: C_q(\mathcal{X}) \to C_{q-1}(\mathcal{X})
\end{equation}
which map the basis element of a cell to a linear combination of basis elements of cells in its boundary.  The boundary maps have the property $D_{q-1} \circ D_q = 0$, and homology is computed as the quotient vector space
\begin{equation}\label{eq:homology}
    H_q(\mathcal{X}) = \ker D_q / \img D_{q+1}.
\end{equation}
Most algorithms for computing persistent homology are based on computing a factorization of filtered boundary matrix, meaning the rows and columns of $D_q$ are arranged in the order of appearance of cells in the filtration:
\begin{equation}\label{eq:factorization_in_q}
    D_q V_q = R_q,
\end{equation}
where $V_q$ is upper-triangular and $R_q$ is \emph{reduced}, which means that it has unique \emph{low pivots}, i.e. the index of the last non-zero row of each column (if it exists) is unique.  The computation of $R_q$ is implicit in the early work of Edelsbrunner, Lester, and Zomorodian \cite{edelsbrunner2000topological}, an explicit algorithm and analysis for $R_q$ was given by Zomorodian and Carlsson \cite{ZCComputingPH2005}, and then a factorization viewpoint was introduced by Cohen-Steiner, Edelsbrunner, and Morozov \cite{vinesvineyards06} when developing a scheme for updating persistent homology, the starting point for this work. 

 We can obtain the persistence information from the factorization in \Cref{eq:factorization_in_q} for each dimension $q$. Only $R_q$ is needed to read off persistent homology: a $q$-dimensional homology class is born when a cell is added that generates a zero column in $R_q$, and this class dies when the index of the birth cell is the pivot of a column of a cell in $R_{q+1}$ \cite{ZCComputingPH2005}.  It is only necessary to form $V_q$ if one wishes to obtain a representative for the homology class, or, as we shall see, update the decomposition.  
 A variety of optimizations have been developed for efficient computation of persistent homology which are compatible with the formation of $V_q$, particularly the clearing \cite{chenPersistentHomologyComputation2011, desilvaDualitiesPersistentCo2011} and compression \cite{ZCComputingPH2005,bauerClearCompressComputing2014} optimizations which are used by state-of-the-art implementations for computing persistent homology \cite{Eirene16, Ripser19}.  
 Other practical accelerations for persistent homology include the use of discrete Morse theory \cite{mischaikowMorseTheoryFiltrations2013} and efficient data structures \cite{boissonnatSimplexTreeEfficient2014a}.

\paragraph{Motivations} Our work is motivated by several applications in topological data analysis.  First, in exploratory data analysis, one may wish to compute the persistent homology of geometric filtrations (i.e. built using pairwise distances) on point-cloud data.  Sometimes several constructions and metrics may be considered, and there may be large amounts of redundant computation done processing the same region for each choice.  Second, in a variety of data analysis problems persistent homology is computed as a feature for each datum in a data set \cite{Dey2017ImprovedIC, garin2019topological_Classification_of_MNIST, Asaad2017TDA_Image_Tampering, Bae2017BeyondDR, Cang2018IntegrationOE, Qaiser2019FastAA}. Often there is a shared structure which we might expect to exploit.  Finally, recent work using persistent homology in gradient-based optimization \cite{rickardCNN2019, topologyLayerMachine2020, carrierePersLayNeuralNetwork2020, chenTopologicalRegularizerClassifiers2018, kimEfficientTopologicalLayer2020,  leygonieFrameworkDifferentialCalculus2019} creates a situation where a topological space undergoes relatively minor modifications in each gradient step.  We wish to be able to reuse computation to the largest extent possible.

\paragraph{Warm Starts} The idea of simply updating the factorization in \Cref{eq:factorization_in_q} for a series of iterated problems is related to a variety of similar techniques in sparse numerical linear algebra and numerical optimization to update $LU$ factorizations \cite{SNOPT2005, gill1987, reid1982, saundersLUSOLSparseLU}.  Our goal is to re-use a previous computation to the largest extent possible, known as a ``warm start'' to the problem.

\paragraph{Contributions} In this work we provide algorithms to compute persistent homology of one filtration starting from the persistent homology of another by updating the associated matrix factorizations.  We analyze the complexity of this update in terms of how close the two filtrations are to each other, namely in terms of the number of cells added and deleted from the filtration and in terms of how the filtration's order changes.  
This approach generalizes the earlier work of Cohen-Steiner, Edelsbrunner, and Morozov \cite{vinesvineyards06} to include addition and removal of cells from a filtration, and includes an analysis that can be applied to general updates beyond elementary permutations.  We additionally adapt our update schemes to cohomology and show how to incorporate the clearing optimization.
Because we perform all updates in a single batch, our method is better suited for blocked or parallel algorithms than the method described in \cite{vinesvineyards06}.
We provide several examples of how our techniques provide practical speedups for both level set and geometric filtrations, and our implementations are made publicly available at 
\url{https://github.com/YuanL12/TDA_Updating_Persistence}.

\section{Algorithms and Analysis}

\subsection{Matrix Reduction}

\paragraph{Notation} We denote column $j$ of a matrix $A$ as $A[j]$, entry $i$ of a (column) vector $v$ as $v[i]$, and the entry in row $i$ and column $j$ of a matrix $A$ as $A[i,j]$.  We say the (low) pivot of a column vector $v$, denoted $\piv(v)$ is the largest index $i$ such that the entry $v[i]$ is non-zero.

Computation of persistent homology typically uses some form of a matrix (column) reduction algorithm on the boundary matrices $\{D_q\}_{q\ge 0}$.  The earliest form of this algorithm applied to persistent homology was described by Edelsbrunner, Letscher, and Zomorodian \cite{edelsbrunner2000topological}, with restrictions to the finite field with two elements and subsets of $\mathbb{R}^3$.  Zomorodian and Carlsson \cite{ZCComputingPH2005} showed that the algorithm can work for general fields and cell complexes, and connected the algorithm with computing the column-echelon form of $D$.  Cohen-Steiner, Edelsbrunner, and Morozov \cite{vinesvineyards06} phrased the algorithm in terms of a matrix decomposition, and established the notation we use here.  We say a matrix $R$ is \emph{reduced} if every column $R[j]$ is either zero or has a unique pivot among columns of $R$.  The reduction algorithm, \Cref{alg:reduction}, produces a reduced matrix $R$ from an input matrix $D$ using elementary column operations that preserve the grading of columns (meaning column $j'$ can be added to column $j$ only if $j'<j$), which means the transformation can be encoded using an invertible upper triangular matrix $V$
\begin{equation}\label{eq:RU_decomposition}
    DV = R.
\end{equation}
\Cref{eq:RU_decomposition} can be re-written as a factorization $D = RU$, where $U=V^{-1}$, referred to as a $RU$ decomposition of $D$ \cite{vinesvineyards06}. 

\begin{algorithm}
\caption{Reduction Algorithm (pHcol)}
\label{alg:reduction}
\begin{algorithmic}[1]
\Procedure{Reduce}{$A$, $B$}
\State \textbf{Input:} $m \times n$ matrix $A$ and $k \times n$ matrix $B$
\State \textbf{Result:} Reduced matrix $A V'$, and matrix $B V'$, formed in place.

\For{ $j = 1,...,n$}
\While{there exists $j' < j$ such that $i = \piv(A[j]) =\piv(A[j']) > 0$}
\State $\alpha = A[i, j]/A[i, j']$
\State $A[j] = A[j] - \alpha A[j']$
\State $B[j] = B[j] - \alpha B[j']$
\EndWhile
\EndFor

\State \textbf{return} $A$, $B$
\EndProcedure
\end{algorithmic}
\end{algorithm}

\begin{proposition}
\label{prop:reduction_alg_reduces}
At the end of \cref{alg:reduction}, the matrix $A$ is reduced in $O(mn \min\{m,n\}$) field operations.  The modification of matrix $B$ incurs $O(kn\min{m,n})$ field operations.
\end{proposition}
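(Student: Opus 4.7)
The plan is to proceed by induction on the outer loop index $j$, establishing the invariant that after iteration $j$ of the for-loop, the columns $A[1], \ldots, A[j]$ have pairwise distinct low pivots (ignoring zero columns). Given this invariant, at termination the matrix $A$ is reduced by the definition given in the text, which establishes correctness; the complexity claim is then a charging argument against the iterations of the inner while-loop.

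For the inductive step, I would analyze a single execution of the while-loop at column $j$. Suppose $i = \piv(A[j]) = \piv(A[j']) > 0$ for some $j' < j$. Because $i$ is the pivot of both columns, $A[i', j] = A[i', j'] = 0$ for every $i' > i$, while $A[i,j]$ and $A[i,j']$ are nonzero. After the update $A[j] \gets A[j] - \alpha A[j']$ with $\alpha = A[i,j]/A[i,j']$, the entry in row $i$ of column $j$ becomes zero and no new nonzero entries appear below row $i$. Hence $\piv(A[j])$ strictly decreases by at least one with each iteration. Since pivots take values in $\{0, 1, \ldots, m\}$, the while-loop runs at most $m$ times. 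It also runs at most $j-1$ times, because each iteration can be charged to a distinct earlier column $j'$ (they have pairwise distinct pivots by the inductive hypothesis, and the pivot of $A[j]$ strictly decreases, so no earlier column can be used twice). Thus the while-loop at column $j$ performs at most $\min\{m, j-1\} \le \min\{m, n\}$ iterations.

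Each iteration of the while-loop consists of one scalar division, the column operation $A[j] \gets A[j] - \alpha A[j']$ of cost $O(m)$, and the operation $B[j] \gets B[j] - \alpha B[j']$ of cost $O(k)$. Multiplying by the per-column iteration bound $\min\{m,n\}$ and summing over the $n$ values of $j$ yields the claimed $O(mn \min\{m,n\})$ total field operations for reducing $A$ and $O(kn \min\{m,n\})$ additional operations for the synchronized updates to $B$.

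The only subtle point in this outline is the strict-decrease argument for $\piv(A[j])$, which depends on $A[j']$ already being ``clean'' below row $i$ at the moment of the update. This is precisely what the loop invariant guarantees for previously processed columns, so once that invariant is set up carefully the complexity bound is a bookkeeping exercise. I do not anticipate any other obstacle.
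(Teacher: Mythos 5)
Your proposal is correct and follows essentially the same route as the paper's proof: induction on the outer loop to establish that earlier columns are reduced, strict decrease of $\piv(A[j])$ to bound the while-loop by $m$, a counting of distinct pivots/columns to bound it by $n$, and per-operation costs of $O(m)$ for $A$ and $O(k)$ for $B$. (One minor remark: the strict decrease of the pivot needs only that $i = \piv(A[j'])$, i.e.\ that $A[j']$ vanishes below row $i$ by the definition of pivot, so it does not actually depend on the loop invariant as your final paragraph suggests.)
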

\begin{proof}
We proceed by induction using the for-loop in line 5.  For $j=1$, there are no columns to the left, so $A[j]$ is either 0 or has a new unique non-zero pivot. Now, suppose for all $j' < j$, all columns have been reduced to 0 or to have unique pivots.  Examining column $j$, if $\piv A[j]$ is shared with a column $j'< j$, we eliminate that pivot in line 7.  Because this is the last non-zero in both columns, the pivot value must strictly decrease.  We continue to decrease $\piv A[j]$ in this way until either $A[j] = 0$ or we encounter a new pivot which is not found for any columns $j' < j$.  Thus, at the end of the for-loop, all columns of $A$ will either be 0 or have a unique low pivot.

Because the pivot in a column $j$ is strictly decreasing, the maximum number of iterations of the while-loop in line 7 is at most $m$, Additionally, the number of unique pivots is bounded by $n$ which bounds the number of iterations of the while-loop by $\min{m,n}$.  Each column addition in $A$ takes $O(m)$ field operations, and there are $n$ columns, so the total number of operations is $O(mn\min{m,n})$.  Column additions on $B$ take $O(k)$ field operations, for a total number of $O(kn\min\{m,n\})$ operations in the algorithm.
\end{proof}

\begin{lemma}\label{lem:maintain_factorization}
If $A', B' = \Reduce(A, B)$ and $A, B$ satisfy the property $A = C B$ for some matrix $C$.  Then the outputs satisfy $A' = C B'$. Additionally, if $B$ is upper-triangular then $B'$ is also upper-triangular.
\end{lemma}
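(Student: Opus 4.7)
The plan is to exploit the fact that \Cref{alg:reduction} applies \emph{identical} elementary column operations to $A$ and $B$ in lockstep: lines 7--8 subtract $\alpha$ times column $j'$ from column $j$ in both matrices using the same scalar $\alpha$ and the same pair of indices $(j', j)$. I would package this observation by defining $V'$ to be the product of all the elementary column-operation matrices that the algorithm executes, taken in the order they are performed. Then, because the outputs are formed in place by these operations, one directly gets $A' = A V'$ and $B' = B V'$.

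Given the factorization $A' = A V'$ and $B' = B V'$, the first claim follows immediately from associativity: if $A = C B$, then
\begin{equation*}
A' = A V' = (C B) V' = C (B V') = C B'.
\end{equation*}
No induction on the loop iterations is needed once $V'$ has been identified, though one could equivalently prove $A = C B$ is preserved as a loop invariant, updating $A$ and $B$ simultaneously.

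For the upper-triangular claim, I would argue that $V'$ is itself upper-triangular. Each elementary operation used by \Reduce\ is of the form ``subtract a multiple of column $j'$ from column $j$'' with $j' < j$ (enforced by the while-loop condition on line 5), which corresponds to right-multiplication by an upper-triangular elementary matrix (identity plus a single off-diagonal entry above the diagonal). The product of such matrices is upper-triangular, so $V'$ is upper-triangular. Therefore, if $B$ is upper-triangular to begin with, then $B' = B V'$ is a product of two upper-triangular matrices and is itself upper-triangular.

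I do not expect any serious obstacle here; the only subtlety is being explicit that the two parallel updates in lines 7 and 8 correspond to right-multiplication by the \emph{same} matrix $V'$, which is precisely what makes the hypothesis $A = CB$ propagate through the algorithm unchanged.
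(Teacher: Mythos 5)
Your proof is correct and follows essentially the same route as the paper: both identify the common right-multiplier $V'$ as a (unit) upper-triangular matrix arising from the column operations with $j'<j$, derive $A'=AV'=CBV'=CB'$, and conclude the upper-triangular claim from closure of upper-triangular matrices under multiplication. No gaps.
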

\begin{proof}
Both $A$ and $B$ are updated by the same column operations, so are acted on the right by the same matrix $V'$.
Because the only column operations are to add columns with smaller column index to the current column $j$, the action of $\Reduce$ is to multiply both inputs $A$ and $B$ by a unit upper triangular matrix $V'$, so $A' = AV'$ and $B' = BV'$.  Thus, $A' = AV' = CBV' = CB'$.  Finally, because upper-triangular matrices are closed under multiplication if $B$ is upper-triangular, so is $B' = BV'$.  
\end{proof}

In the context of persistent homology, we compute $R_q, V_q = \Reduce(D_q, I)$, where $I$ is the identity matrix of an appropriate size.  This produces the $RU$ decomposition $R_q = D_q V_q$ from the initial identity $D_q = D_q I$ and \Cref{lem:maintain_factorization}.   The adaptation of \Cref{prop:reduction_alg_reduces} yields an asymptotic run time bound of $O(n \max\{m,n\}\min\{m,n\})$, or cubic in the number of cells in a filtration \cite{edelsbrunner2000topological}.  An output-sensitive bound can be obtained in terms of the sum of the squared lengths $|d_i - b_i|^2$ in the persistence barcode \cite{edelsbrunner2000topological}, which can also be applied to our use of the reduction algorithm.

The persistence barcode can be read off from $R_q$ by computing this decomposition for each filtered boundary matrix in a filtered chain complex: a new $q$-dimensional bar is born for each column that is reduced to zero in $R_q$, and this bar dies when the same column index appears as a pivot of a column in $R_{q+1}$ \cite{ZCComputingPH2005}.    Unless the visualization of a representative of each homology class is needed, the matrix $V$ that provides this representative information is often not formed in order to save unnecessary work.  In this case, line 8 of \Cref{alg:reduction} and the input $B$ can be omitted.  

 The decomposition for persistent homology can be obtained in matrix multiplication time with more complex algorithms \cite{ZZmatmultime2011}, but practical implementations use variants of the standard algorithm and sparsity typically makes asymptotic run time bounds pessimistic in practice \cite{otterRoadmapComputationPersistent2017}.

\subsection{Clearing}\label{sec:clearing}
The decomposition $DV = R$ is not generally unique, and so  \Cref{alg:reduction} gives one of many valid decompositions. If $j' < j$ and $\piv R[j'] < \piv R[j]$ or $R[j'] = 0$ we can add $R[j']$ to $R[j]$ altering the pivots in $R$ or the upper triangular structure of $V$.  This observation is tied to the non-uniqueness of homology representatives and different choices of bases for homology.  In the case of clearing optimization, a different decomposition is obtained.  In this section, we adapt the approach of Bauer \cite{Ripser19} to the context of warm starts.

Suppose we have $R_q^0 = D_q V_q^0$, where $V_q^0$ is upper triangular, but $R_q^0$ is not necessarily reduced for $q = 0,1,\dots,r$, where $r$ is the largest dimension boundary matrix that we form.   We first compute $R_r, V_r = \Reduce(R_r^0, V_r^0)$ which is a valid $RU$ decomposition of $D_r$ by \Cref{lem:maintain_factorization}. 

We now proceed by computing $RU$ decompositions $R_q = D_q V_q$ in decreasing order of $q = r-1, r-2,\dots$.  
\begin{proposition}\label{prop:warm_start_clearing}
Suppose $D_q \circ D_{q+1} = 0$, and that we have an $RU$ decomposition $R_{q+1} = D_{q+1} V_{q+1}$, and a decomposition $R_q^0 = D_q V_q^0$ where $V_q^0$ is an invertible upper triangular matrix but $R_q^0$ is not necessarily reduced.  Let $\piv R_{q+1} = \{\piv R_{q+1}[j] >0, j=1,2,\dots \}$. Then
we can obtain a new decomposition $R_q^1 = D_q V_q^1$ where
\begin{equation}
R_q^1[i] = \begin{cases}
0 & i \in \piv R_{q+1}\\
R_q^0[i] & i \notin \piv R_{q+1}
\end{cases}, \qquad V_q^1 = \begin{cases}
R_{q+1}[j] & i = \piv R_{q+1}[j]\\
V_q^0[i] & i \notin \piv R_{q+1}
\end{cases}
\end{equation}
where $V_q^1$ is an invertible upper-triangular matrix.
\end{proposition}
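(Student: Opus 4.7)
The plan is to verify the two conclusions directly from the definitions: first that $R_q^1 = D_q V_q^1$ holds column by column, and second that $V_q^1$ is invertible upper triangular.

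I would begin by splitting the column index set into two disjoint cases according to whether $i \in \piv R_{q+1}$ or not. For indices $i \notin \piv R_{q+1}$, the $i$-th columns of $V_q^1$ and $R_q^1$ are copied verbatim from $V_q^0$ and $R_q^0$, so the identity $R_q^1[i] = D_q V_q^1[i]$ reduces to the hypothesis $R_q^0[i] = D_q V_q^0[i]$. The substantive case is $i = \piv R_{q+1}[j]$ for some unique $j$ (uniqueness is guaranteed by $R_{q+1}$ being reduced). Here $R_q^1[i] = 0$ and $V_q^1[i] = R_{q+1}[j]$, so I need $D_q R_{q+1}[j] = 0$. This is where the chain condition enters: since $R_{q+1}[j] = D_{q+1} V_{q+1}[j]$, we have $D_q R_{q+1}[j] = (D_q \circ D_{q+1}) V_{q+1}[j] = 0$. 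This is the only conceptually interesting step, and it is short once the bookkeeping is in place.

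Next I would check that $V_q^1$ is upper triangular. Again splitting by case: for $i \notin \piv R_{q+1}$ the column $V_q^1[i] = V_q^0[i]$ is supported on rows $\le i$ by assumption on $V_q^0$; for $i = \piv R_{q+1}[j]$ the column $V_q^1[i] = R_{q+1}[j]$ has its last nonzero entry precisely at row $i$ by the definition of pivot, hence is supported on rows $\le i$. For invertibility, since the matrix is upper triangular it suffices to show each diagonal entry is nonzero. For $i \notin \piv R_{q+1}$, $V_q^1[i,i] = V_q^0[i,i] \neq 0$ by invertibility of $V_q^0$; for $i = \piv R_{q+1}[j]$, $V_q^1[i,i]$ equals the pivot entry $R_{q+1}[j][i]$, which is nonzero by definition.

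The main obstacle, if there is one, is not in any single step but in keeping the indexing consistent: one must track that the assignment $i \mapsto j$ with $i = \piv R_{q+1}[j]$ is well-defined (uses reducedness of $R_{q+1}$) and that the substituted column, which is a column of $R_{q+1}$ viewed as a column of a matrix on the chain complex at dimension $q$, really does have its support confined to rows $\le i$. Everything else follows from the chain complex identity $D_q \circ D_{q+1} = 0$ together with straightforward column-wise comparisons, so I do not anticipate needing any tools beyond those already established in \Cref{lem:maintain_factorization} and the definition of reducedness.
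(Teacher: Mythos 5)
Your proposal is correct and follows essentially the same route as the paper's proof: verify the factorization column by column using $D_q R_{q+1}[j] = (D_q \circ D_{q+1}) V_{q+1}[j] = 0$, then observe that every column of $V_q^1$ has pivot exactly $i$ (inherited from the invertible upper-triangular $V_q^0$ or forced by the choice $i = \piv R_{q+1}[j]$), giving upper triangularity and invertibility. Your added remarks on the well-definedness of $i \mapsto j$ via reducedness of $R_{q+1}$ are a harmless elaboration of what the paper leaves implicit.
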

\begin{proof}
If $\piv R_{q+1}[j] = i > 0$, the identity $D_q \circ D_{q+1} = 0$ implies that 
\begin{equation}
D_q (D_{q+1} V_{q+1}) [j] = D_q R_{q+1}[j] = 0
\end{equation}
We can now verify that $R_q^1[i] = D_q V_q^1[i]$ for each column index $i=1,2,\dots$ so we have $R_q^1 = D_q V_q^1$. 

Because $V_q^0$ is invertible, $V_q^0[i]$ must have pivot $i$ otherwise its columns would not be linearly independent.  $V_q^1[i]$ is either identical to $V_q^0[i]$ and so also has pivot $i$, or it is set to $R_{q+1}[j]$ which is selected to have pivot $i$, so $V_q^1[i]$ has pivot $i$ in either case.  Thus, $V_q^1$ is upper triangular, and the columns of $V_q^1$ are linearly independent so it is invertible.
\end{proof}
We can then then $R_q, V_q = \Reduce(R_q^1, V_q^1)$, to obtain a $RU$ decomposition of $D^q$ following \Cref{lem:maintain_factorization}.  The original setting of the clearing optimization uses $R_q^0 = D_q$, and $V_q^0 = I$.  Because the columns $R_q^1[i]$ for which $i$ is a pivot in $R_{q+1}$ can be set to be zero (whence the term clearing) they will not be further reduced, which can save a substantial amount of work, as observed in \cite{desilvaDualitiesPersistentCo2011,chenPersistentHomologyComputation2011}.

\subsection{Permuting Filtration Order}\label{sec:perm_update}

Assuming we have computed decompositions $D_q V_q = R_q$, $q=0,1,2,\dots$, of boundary matrices of a filtered cell complex, we would like to update this decomposition to compute persistent homology of the same cell complex with a different filtration order.  If $D_q'$ is the boundary of this new filtration, then
\begin{equation}
    D_q' = P_{q-1} D_q P_q
\end{equation}
Where $P_{q-1}$ and $P_q$ are permutations of the orderings of $(q-1)$-cells and $q$-cells respectively.  We can then modify the decomposition of $D_q$:
\begin{align}
    P_{q-1} D_q P_q P_q^T V_q &= P_{q-1} R_q\\
    D_q' P_q^T V_q &= P_{q-1} R_q\label{eq:initial_update_perm}
\end{align}
where we use the identity $P_q P_q^T = I$.  There are two obstacles that we must overcome to produce a valid $RU$ decomposition $D_q' V_q' = R_q'$. First, $P_q^T V_q$ is not upper triangular, which we correct with \cref{alg:make_upper_triangular}.  Second $P_{q-1} R_q$ may no longer have unique column pivots, which can be corrected using a second application of \cref{alg:reduction}.  We update the decomposition using \Cref{alg:perm_update}.

\begin{algorithm}
\caption{Correct upper-triangular matrix.}
\label{alg:make_upper_triangular}
\begin{algorithmic}[1]
\Procedure{MakeUpperTriangular}{$A$, $B$}
\State \textbf{Input:} Invertible $n \times n$ matrix $A$, $m \times n$ matrix $B$.
\State \textbf{Result:} Upper-triangular matrix $AT$, matrix $BT$, formed in place.
\State $A, B$ = \Reduce($A$, $B$) \Comment{reduce $A$ using \Cref{alg:reduction}}

\For{ $j = 1,...,n$} \Comment{make $A$ upper-triangular}
	\If {$\piv A[j] \neq j$}
	    \State Find $j'$ such that $j = \piv A[j']$.
		\State Swap column $A[j]$ and $A[j']$
		\State Swap column $B[j]$ and $B[j']$
	\EndIf   
\EndFor

\State \textbf{return} $A, B$
\EndProcedure
\end{algorithmic}
\end{algorithm}

\paragraph{Correctness of \Cref{alg:make_upper_triangular}} We assume that the input $A$ is square and invertible, so its columns are linearly independent.  The matrices $A$ and $B$ are modified in-place by column operations, so we can consider the output $A', B' = \MakeUpperTriangular(A, B)$ as the application of an invertible $n\times n$ matrix $T$ on the right of both inputs: $A' = AT, B' = BT$.  After line 4, the matrix $A$ has been reduced.  Because $A$ is invertible, it can not have any zero columns, and so every column must have a pivot, giving $n$ distinct pivots.  The number of rows of $A$ is the same as the number of columns, so there at most $n$ possible pivots.  Thus, after reduction, every row index of $A$ is used as a pivot.  

Then, in the for loop in line 5, the columns of $A$ are permuted so $\piv A[j] = j$, using the fact that there must exist some column $A[j']$ such that $j = \piv A[j']$.  Afterward, $A$ is upper triangular. \qed

\begin{algorithm}
\caption{Update Decomposition with Permutation}
\label{alg:perm_update}
\begin{algorithmic}[1]
\Procedure{UpdatePermutation}{$R, V, P_r, P_c$} 
\State \textbf{Input:} $m \times n$ matrix $R$, invertible $n \times n$ matrix $V$, $m \times m$ permutation matrix $P_r$, and $n \times n$ permutation matrix $P_c$ 
\State \textbf{Result:} Factorization $D' V' = R'$ where $D'  = P_r D P_c = P_r R V^{-1} P_c$

\State $V = P_c^{T} V$
\State $R=P_r R$
 
\State $V, R = \MakeUpperTriangular(V, R)$ \Comment{\Cref{alg:make_upper_triangular}}

\State $R, V = \Reduce(R, V)$ \Comment{\Cref{alg:reduction}, can use clearing.}
\State \textbf{return} $R, V$
\EndProcedure
\end{algorithmic}
\end{algorithm}

\Cref{alg:perm_update} has a similar purpose to the algorithm of Cohen-Steiner, Edelsbrunner, and Morozov \cite{vinesvineyards06} which breaks up permutations into a sequence of elementary transpositions and applies updates based on one of four cases.  In comparison, \Cref{alg:perm_update} is simpler to state and use of the standard reduction algorithm makes implementation straightforward in a package that already computes persistent homology.  The algorithm of \cite{vinesvineyards06} uses a more specialized data structure to allow row swaps in constant time and avoid sorting row indices in the case of elementary transpositions. 

While \Cref{alg:perm_update} could be adapted to use this optimization, the applications we consider typically permute enough elements of the matrix that there is not a significant disadvantage to using whatever matrix data structure is already used for \Cref{alg:reduction}.

\paragraph{Correctness of \Cref{alg:perm_update}} The operations in lines 4 and 5 are given by \cref{eq:initial_update_perm}, so afterward $D'V = R$.  This decomposition invariant is maintained by applying the same column operations to $V$ and $R$ through the rest of algorithm.  In line 6, we make $V$ upper-triangular.  Finally, in line 7, we reduce $R$, and maintain the upper-triangular structure of $V$ following \Cref{lem:maintain_factorization}, producing a $RU$ decomposition of $D'$.  If we process the permutations to the boundaries in decreasing dimension order, we can use the clearing optimization, following \Cref{prop:warm_start_clearing}. \qed

\paragraph{Complexity} A trivial upper bound for the run time of \Cref{alg:perm_update} comes from the calls to \Cref{alg:reduction}.  However, a tighter bound can be obtained based on how greatly the permutations change the filtration order.  Let $|P|_K$ denote the the Kendall--tau distance between the permutation $P$ and the identity permutation, meaning the number of elementary transpositions required to transform $P$ into the identity permutation \cite{diaconisGroupRepresentationsProbability1988}. 
\begin{theorem}\label{thm:perm_time}
In \Cref{alg:perm_update}, if the final reduction in line 11 takes $N(R,V)$ field operations, then \cref{alg:make_upper_triangular} takes $O(\max\{m,n\}|P_c|_K))$ and 
\Cref{alg:perm_update} performs the update in
\begin{equation}\label{eq:perm_update_bound}
    O((\nnz(V) \log n + \nnz(R) \log m + \max\{m,n\}|P_c|_K) + N(R,V))
\end{equation}
field operations.
\end{theorem}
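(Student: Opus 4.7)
The plan is to bound each line of \Cref{alg:perm_update} separately and then sum. Lines 4 and 5 perform the left multiplications $V \leftarrow P_c^T V$ and $R \leftarrow P_r R$, which are pure row permutations. Under a sparse column-major storage in which each column is kept as a sorted list of (row-index, value) pairs, applying a row permutation reduces to relabeling each entry and re-sorting each column. Re-sorting a column of $V$ with $\nnz(V[j])$ entries takes $O(\nnz(V[j]) \log n)$, which sums to $O(\nnz(V) \log n)$; the symmetric analysis for $R$ gives $O(\nnz(R) \log m)$. Line 7 is charged $N(R,V)$ directly by hypothesis, so all that remains is to bound line 6.

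For \Cref{alg:make_upper_triangular} I would analyze the two phases separately. The final for-loop is essentially selection sort on the permutation $\sigma$ defined by $\sigma(j)=\piv V[j]$: it performs at most $n-c$ column swaps, where $c$ is the number of cycles of $\sigma$, since each iteration either fixes a column or places a cycle element in position. Using the standard inequality $n-c\le |P_c|_K$ (the Kendall--tau distance bounds the transposition distance from above), and noting that one swap exchanges one column of the $n\times n$ matrix $V$ and one column of the $m\times n$ matrix $R$ at cost $O(m+n)=O(\max\{m,n\})$, the swap loop contributes $O(\max\{m,n\}\,|P_c|_K)$.

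The initial \Reduce{} call inside \Cref{alg:make_upper_triangular} is where the main difficulty lies. Its input is $P_c^T V_{\text{old}}$, a row permutation of an upper-triangular matrix; worked examples show that the pivots of this matrix need not be distinct and that nontrivial column additions can occur. The plan is to bound the number of column additions by $|P_c|_K$, so that, combined with the per-addition cost $O(\max\{m,n\})$ of updating a column of $V$ and the corresponding column of $R$, this phase also costs $O(\max\{m,n\}\,|P_c|_K)$. Summing all contributions then yields the stated bound.

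The hard part will be establishing the column-addition bound for the internal reduction. I would argue it by a vineyards-style reduction: factor $P_c$ as a product of $|P_c|_K$ adjacent transpositions, apply them one at a time, and invoke the observation of Cohen-Steiner, Edelsbrunner, and Morozov that each adjacent transposition can be absorbed into the decomposition with at most one column addition. An auxiliary step is to verify that processing the transpositions in one batch (as our algorithm does) performs no more work than processing them sequentially, which follows because a batched reduction can only merge column operations that a sequential reduction would perform anyway. With that inequality in hand the theorem assembles from the four contributions above.
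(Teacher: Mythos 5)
Your decomposition of the cost — row permutations at $O(\nnz(V)\log n + \nnz(R)\log m)$, the internal reduction and column sort inside \MakeUpperTriangular{} at $O(\max\{m,n\}|P_c|_K)$, and the final reduction charged $N(R,V)$ — matches the structure of the paper's argument in \Cref{sec:perm_complexity}, and the row-permutation analysis is essentially identical to \Cref{complexity:MakeUpperTrian}'s surroundings. The substantive difference, and the gap, is in how you bound the number of column additions performed by the internal \Reduce{} call on $P_c^T V$. The paper proves this directly (\Cref{prop:V_dup_pivots}): it counts, row by row from the bottom, the duplicate pivots that must be eliminated in row $i$, shows this is at most $n - \pi(i) - \sum_{j>i}\II\{\pi(j) > \pi(i)\}$, and sums to exactly $|P_c|_K$. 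You instead propose to factor $P_c$ into $|P_c|_K$ adjacent transpositions, invoke the Cohen--Steiner--Edelsbrunner--Morozov one-operation-per-transposition result, and then assert that the batched reduction ``performs no more work than processing them sequentially'' because it ``can only merge column operations that a sequential reduction would perform anyway.'' That last assertion is the entire content of the lemma and is not justified: \Cref{alg:reduction} chooses its eliminating columns greedily left-to-right on the fully permuted matrix, and there is no a priori correspondence between its operations and the operations of a sequence of vineyard updates; the fill-in patterns and the columns used as pivots can differ at every step. Without a proof of that comparison (or a direct count as in the paper), the bound $O(\max\{m,n\}|P_c|_K)$ for the reduction phase is unestablished.

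A secondary, smaller issue: in the final for-loop of \Cref{alg:make_upper_triangular} you charge $O(\max\{m,n\})$ per physical column swap and bound the number of swaps by $n-c \le |P_c|_K$ applied to ``the permutation $\sigma(j)=\piv V[j]$.'' For that inequality to give the theorem you must first identify the post-reduction pivot permutation with (the inverse of) $\pi$, which you do not argue; the paper sidesteps this entirely by swapping column pointers in $O(1)$ each, so the whole loop is $O(n)$, which is absorbed since $\nnz(V)\ge n$. This part of your argument is repairable (the identification does hold, since the first $j$ reduced columns span $\mathrm{span}(e_{\pi^{-1}(1)},\dots,e_{\pi^{-1}(j)})$), but as written it is an additional unproved step layered on top of the main gap above.
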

A proof is given in \Cref{sec:perm_complexity}.  If the final reduction in line 7 is faster than reducing from scratch \Cref{alg:reduction} (this often happens in practice), this means that we expect an advantage to using \Cref{alg:perm_update} when filtration values are not changed too drastically, and since $|P_r|_K =O(m^2)$ and $|P_c|_K = O(n^2)$, the algorithm is also worst-case cubic in the number of cells, comparable to \Cref{alg:reduction} albeit with a worse scaling constant.  Note that we can't expect to do better than this since we use the standard reduction algorithm as a subroutine.

\subsection{General Updates}\label{sec:general_update}
Permutation of filtration order is sufficient for applications such as computing persistent homology of different super-level set filtrations on a fixed complex.  However, we may also wish to insert and delete cells in a filtration. One example where this commonly occurs is in computing persistent homology to an intermediate threshold parameter.  For instance, the persistent homology of the Vietoris--Rips filtration on a finite metric space which is often run to a truncated filtration parameter such as the enclosing radius of the metric space \cite{Eirene16, Ripser19}.  Modifications to the underlying metric space can permute the order of simplicies and also necessitate the addition or deletion of simplices which cross the truncation threshold.

Suppose we have filtrations $F$ and $F'$ and wish to compute the decomposition $D_q' V_q' = R_q'$ for $F'$ starting from the decomposition $D_q V_q = R_q$.  We first compute $I_q$ and $I_{q-1}$, the sets of cell indices which will be deleted from $F$ in dimensions $q$ and $q-1$ to form $F \cap F'$. Next, we compute $I_q'$ and $ I_{q-1}'$, the sets of cell indices which are in $F'- F \cap F'$ in dimensions $q$ and $q-1$ which will be added to the filtration $F'$. Finally, we compute $P_q$ and $P_{q-1}$, the permutations of filtration order on the $q$ and $q-1$ cells that are present in $F\cap F'$.  The key observations for our procedure are that in the context of the matrix decomposition $DV = R$,
\begin{enumerate}
    \item\label{it:delete} Cells at the end of a filtration are trivial to remove without altering the upper-triangular structure of $V$ and the reduced structure of $R$; 
    \item\label{it:insert} Cells can be inserted in arbitrary locations without altering the upper-triangular structure of $V$. 
\end{enumerate}
Observation \ref{it:delete} follows the fact that if a $q$-cell is the final cell in a filtration then its column in $D_q$ will not be used to reduce any other columns in the $RU$-decomposition since it is furthest right. Furthermore, its row in $D_{q+1}$ will be the last row and will be entirely zero since it can not appear as a face in a valid filtration (since it is the final cell and faces must appear before a cell can appear).  In contrast to observation \ref{it:delete}, deleting rows and columns in the middle of the filtration would require updating columns to the right which use the deleted column in their reduction.  In order to exploit this observation, we form permutations $Q_r$ and $Q_c$ which permute the filtration order of $F$ so that cells $F\cap F'$ are in the filtration order of $F'$ (specified by $P_r$ and $P_c$ respectively), and the remaining cells in $F$ are permuted to the end of the filtration. 

Observation \ref{it:insert} is easy to see, since adding columns to the boundary $D_q$ (and thus rows and columns to $V_q$) does not invalidate the upper-triangular structure of $V_q$, although a final pass of \Cref{alg:reduction} is required to ensure $R_q$ and $R_{q+1}$ are reduced.  We incorporate these observations into \Cref{alg:gen_update}, which generalizes \Cref{alg:perm_update}.

\begin{algorithm}
\caption{Update Decomposition with Permutation, Insertion, and Deletion}
\label{alg:gen_update}
\begin{algorithmic}[1]
\Procedure{GeneralUpdate}{$R, V, Q_r, Q_c, k_r, k_c, I_r', I_c', D'_c$}
\State \textbf{Input:} $m \times n$ matrix $R$; $n \times n$ matrix $V$; $m\times m$ permutation matrix $Q_r$; $n\times n$ permutation matrix $Q_c$; $k_r$ and $k_c$: respective number of rows and columns deleted from $D$; $I_r'$ and $I_c'$:  respective indices of rows and column indices to insert to form $D'$; $m - k_r + |I_r'| \times |I_c'|$ matrix $D_c'$ containing columns to be inserted.
\State \textbf{Result:} Factorization $D' V' = R'$ incorporating updates
\State $V = Q_c^T V$
\State $R = Q_r R$
\State $V, R = \MakeUpperTriangular(V, R)$ \Comment{\Cref{alg:make_upper_triangular}}

\State Delete the final $k_r$ rows and $k_c$ columns of $R$ and the final $k_c$ rows and columns of $V$.
\State Insert rows of zeros into $R$ at final locations $I_r'$.
\State Insert columns $D'_c$ into $R$ at final locations $I_c'$.
\State Insert rows and columns specified by $I_c'$ in $V$ which act as the identity.

\State $R, V = \Reduce(R, V)$ \Comment{\Cref{alg:reduction}, can use clearing}
\State \textbf{return} $R, V$
\EndProcedure
\end{algorithmic}
\end{algorithm}

\paragraph{Correctness of \Cref{alg:gen_update}} The key idea of the algorithm is to preserve the decomposition identity $D V= R$ without modifying the boundary matrix directly.

Lines 4 and 5 apply row permutations following \cref{eq:initial_update_perm} to put the rows and columns to be deleted in the final blocks.  Next, in line 6, $V$ is made upper triangular so we can safely apply Observation \ref{it:delete}.
In fact, we obtain a decomposition of the permuted boundary matrix $\tilde{D} = Q_r D Q_c$:
\begin{equation}\label{eq:deletion_permutation}
\tilde{D}V' = \begin{bmatrix}
\tilde{D}_{1,1} & \tilde{D}_{1,2}\\
0 & \tilde{D}_{2,2}
\end{bmatrix}\begin{bmatrix}
V'_{1,1} & V'_{1,2}\\
0 & V'_{2,2}
\end{bmatrix} = \begin{bmatrix}
R'_{1,1} & R'_{1,2}\\
0 & R'_{2,2}
\end{bmatrix} = R'.
\end{equation}
The lower-left block of the matrix $\tilde{D}$ is zero, because we may not delete any faces of cells which stay in $F \cap F'$. The lower left block of $R'$ is zero because those columns are linear combinations of columns in the first block of columns in $\tilde{D}$.  Line 7 then deletes all but the top-left block in each of the matrices in \cref{eq:deletion_permutation}, following observation \ref{it:delete}, which can be explicitly confirmed by verifying that $R'_{1,1} = \tilde{D}_{1,1} V'_{1,1}$ after performing the block matrix-matrix multiplication.  
After row and column deletions, we are left with the above decomposition of the boundary matrix $\tilde{D}_{1,1}$ which corresponds to the boundary matrix of $F\cap F'$ in the filtration order of $F'$. We will still use $V'$ and $R'$ to denote their block $V'_{1,1}$ and $R'_{1,1}$ and $\tilde{D}$ for $\tilde{D}_{1,1}$. 

From line 8 to 10, we consider how to transform $\tilde{D}$ to the desired boundary matrix $D'$ of $F'$ by adding cells in $F'$. Assuming the decomposition is for the boundary in dimension $q$, we now consider insert the new faces of dimension $q-1$ in $F'$. This requires us to insert rows of zeros into $\tilde{D}$, which corresponds to line 8, where we insert the rows of zeros to $R'$. Note that it does not affect the upper-triangular structure of $V'$.  
In line 9, we add columns to $R'$ corresponding to the new cells of dimension $q$ in the filtration order of $F'$.  Because we have not yet modified these columns, in line 10 we insert rows and columns in $V'$ to act as the identity, and we now have the identity $D'V' = R'$, where $D'$ is the $q$-dimensional boundary of $F'$ in filtration order. However, while $V'$ is still upper-triangular, the newly inserted columns of $R'$ may have introduced duplicated pivots.  

Finally, in line 11 we make a call to \cref{alg:reduction} to finish the reduction of $R'$, which can use clearing following \cref{prop:warm_start_clearing}. \qed

\paragraph{Complexity} Again, we are interested in an no-worse-than-cubic bound for \Cref{alg:gen_update}, and defer proof to \Cref{sec:general_complexity}.

\begin{theorem}\label{thm:gen_time}
In
\Cref{alg:gen_update}, if the final reduction in line 11 takes $N(R,V)$ field operations, then \Cref{alg:gen_update} performs the update in
\begin{equation}
O(\nnz(V) (\log n + |I'_c|) + \nnz(R) (\log m + |I'_r|)+ \max\{m,n\}|Q_c|_K +N(R,V))
\end{equation}
field operations, where $|Q_c|_K$ is the Kendall--tau distance between the permutation $Q_c$ and the identity permutation.
\end{theorem}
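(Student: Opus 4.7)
The plan is to mimic the structure of the proof of \Cref{thm:perm_time} (deferred to \Cref{sec:perm_complexity}) line by line through \Cref{alg:gen_update}, charging each step to one of the terms in the claimed bound, and then invoking the complexity of the final reduction as a black box $N(R,V)$.

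First I would handle the permutation-style operations in lines 4--6. Line 4 applies $Q_c^T$ to $V$; because $V$ is stored sparsely, each nonzero entry needs its row index relabeled and its column potentially re-sorted, which costs $O(\nnz(V)\log n)$ by the same sorted-sparse-column argument used for \Cref{thm:perm_time}. Line 5 is symmetric and costs $O(\nnz(R)\log m)$. Line 6 is a direct call to \MakeUpperTriangular{} on $V$, which by the analysis of \Cref{thm:perm_time} costs $O(\max\{m,n\}\,|Q_c|_K)$ (the column-swap cost is proportional to the Kendall--tau distance between the pivot permutation of $V$ and the identity, which after step 4 is bounded by $|Q_c|_K$). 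These three contributions give the $\nnz(V)\log n$, $\nnz(R)\log m$, and $\max\{m,n\}\,|Q_c|_K$ terms of the bound.

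Next I would analyze the deletion step in line 7 and the insertion steps in lines 8--10. Deletion of the final $k_r$ rows and $k_c$ columns is a trivial trimming operation on the already-sorted sparse structure, costing at most $O(\nnz(R)+\nnz(V))$ which is absorbed into the other terms. For line 8, inserting $|I_r'|$ rows of zeros at specified positions requires relabeling the row index of every nonzero entry of $R$ by a shift that depends on how many insertion indices lie below it; computing this shift by a linear scan through the (sorted) insertion list gives $O(\nnz(R)\,|I_r'|)$ total, matching the claimed $\nnz(R)\,|I_r'|$ term. Line 10 performs the analogous relabeling in $V$, together with inserting rows and columns that act as the identity, in $O(\nnz(V)\,|I_c'|)$ time. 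Line 9 inserts the given columns $D_c'$ into $R$, which only contributes $O(\nnz(D_c'))$, and this is absorbed into the cost of the final reduction since those nonzeros become part of $R$'s working set.

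Finally, I would observe that after lines 4--10 we have a valid (though not necessarily reduced) decomposition $D' V' = R'$ with $V'$ upper-triangular of the correct size, so \Cref{lem:maintain_factorization} guarantees that the call to \Reduce{} in line 11 produces the desired $RU$ decomposition and its cost is by definition $N(R,V)$. Summing the charges from the previous two paragraphs with $N(R,V)$ yields the stated bound. The main obstacle is the bookkeeping in the insertion steps: one must argue carefully that a single pass over the sparse representation of $R$ (respectively $V$) suffices to perform all $|I_r'|$ (respectively $|I_c'|$) insertions, and that the $|I_c'|$ and $|I_r'|$ factors in the bound genuinely reflect the per-entry relabeling cost rather than a repeated full-matrix rewrite; once this is pinned down, the rest of the proof is essentially a re-use of the permutation-case analysis.
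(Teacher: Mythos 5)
Your proposal is correct and follows essentially the same route as the paper: charge the row permutations to $O(\nnz(V)\log n + \nnz(R)\log m)$, charge \MakeUpperTriangular{} to $O(\max\{m,n\}|Q_c|_K)$ via the permutation-case analysis, bound row insertions by $O(|I_r'|\nnz(R))$ and $O(|I_c'|\nnz(V))$ from index relabeling, and treat the final reduction as the black-box term $N(R,V)$. The only cosmetic differences are that the paper observes the deletions cost $O(1)$ (rather than your absorbed $O(\nnz(R)+\nnz(V))$) and charges column insertion $O(n)$ via pointer insertion; neither affects the bound.
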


\subsection{Adaptations to Persistent Cohomology}
Persistent cohomology \cite{desilvaDualitiesPersistentCo2011} is a dual algorithm to compute persistence barcodes which are identical to those computed in persistent homology.  In conjunction with the clearing optimization, persistent cohomology can be very efficient compared to homology on filtrations such as the Vietoris--Rips filtration \cite{Ripser19}.  The starting point is to compute $RU$-decompositions of the anti-transposed boundary (or coboundary) matrices $D^{q} = J D_{q+1}^T J$ where the $J$ operator is an anti-diagonal permutation of appropriate dimension which reverses row order when applied on the left or column order when applied on the right.  As a result, the row and column indices of $D^q$ are put in decreasing filtration order as opposed to increasing filtration order for $D_{q+1}$.

By properties of the transposition, we have $D^{q+1} \circ D^q = 0$, so to employ the clearing optimization, we must now process the matrices in order of increasing $q$, and the adaptation of \Cref{prop:warm_start_clearing} is entirely straightforward.  \Cref{alg:perm_update} can be applied to permutations of the filtration with no modification.  However, \Cref{alg:gen_update} requires modifications to the handling of insertions and deletions.

The first modification is that instead of inserting non-zero columns into the boundary $D_{q+1}$ we insert non-zero rows into the coboundary $D^q$.  However, the matrix we modify is actually $R$ in the decomposition $DV = R$.  Assuming we currently have a decomposition $DV = R$, where $V$ need not be upper triangular and $R$ need not be reduced, and we wish to insert a row $d$ into $D$, we need to form the row $r= dV$ to insert into $R$.

The second modification is that instead of permuting simplices to be deleted to the (2,2) block of \cref{eq:deletion_permutation}, we wish to permute them to the (1,1) block.   Because the rows and columns in $D^q$ are put in reverse filtration order, this will again permute cells for deletion to the end of the filtration -- this makes sense when considering the situation where deletions are primarily occurring when cells cross a truncation threshold, and we wish to minimize $|Q_c|_K$ and $|Q_r|_K$.  Thus, we form $Q_c$ and $Q_r$ to put the cells to be deleted in the (1,1) block of $D$, and put the indices for cells in $F\cap F'$ in reverse filtration order of $F'$.  Note that in \cref{eq:deletion_permutation} that the (2,1)-block of $Q_r D Q_c = \tilde{D}$ is still 0, as any cells which will be deleted can not have cofaces in the remaining filtration, so we can verify that 
\begin{equation}
R'_{2,2} = \tilde{D}_{2,1} V'_{1,2} + \tilde{D}_{2,2} V'_{2,2} = \tilde{D}_{2,2} V'_{2,2}
\end{equation}
and we can safely remove the all but the (2,2)-block of the matrix decomposition.

\begin{algorithm}
\caption{Update Cohomology with Permutation, Insertion, and Deletion}
\label{alg:gen_update_cohomology}
\begin{algorithmic}[1]
\Procedure{GeneralUpdateCohomology}{$R, V, Q_r, Q_c, k_r, k_c, I_r', I_c', D'_c$}
\State \textbf{Input:} $m \times n$ matrix $R$; $n \times n$ matrix $V$; $m\times m$ permutation matrix $Q_r$; $n\times n$ permutation matrix $Q_c$; $k_r$ and $k_c$: respective number of rows and columns deleted from $D$; $I_r'$ and $I_c'$:  respective indices of rows and column indices to insert to form $D'$; $|I_r'| \times n - k_c + |I_c'| \times $ matrix $D_r'$ containing rows to be inserted.
\State \textbf{Result:} Factorization $D' V' = R'$ incorporating updates
\State $V = Q_c^T V$
\State $R \gets Q_r R$
\State $V, R = \MakeUpperTriangular(V, R)$ \Comment{\Cref{alg:make_upper_triangular}}

\State Delete the initial $k_r$ rows and $k_c$ columns of $R$ and the initial $k_c$ rows and columns of $V$.
\State Insert zero columns into $R$ at final locations $I_c'$.
\State Insert rows $D'_r V$ into $R$ at final locations $I_r'$.
\State Insert rows and columns specified by $I_c'$ in $V$ which act as the identity.

\State $R, V = \Reduce(R, V)$ \Comment{\Cref{alg:reduction}, can use clearing}
\State \textbf{return} $R, V$
\EndProcedure
\end{algorithmic}
\end{algorithm}

The correctness of \Cref{alg:gen_update_cohomology} can be obtained from the proof of correctness of \Cref{alg:gen_update} incorporating the above discussion.  The modifications produce a different asymptotic complexity using our assumed data structures.

\paragraph{Complexity} We only need to add an additional cost for rows computation ($D'_r V$ in line 9) to the complexity of  \Cref{alg:gen_update}.

\begin{theorem}\label{thm:cohom_time}
In \Cref{alg:gen_update_cohomology} if the final reduction line 11 takes $O(N(R,V))$ field operations, then \Cref{alg:gen_update_cohomology}  performs the update in
\begin{equation}
O(\nnz(V) (\log n + |I'_c|) + \nnz(R) (\log m + |I'_r|)+ \max\{m,n\}|Q_c|_K +  |I'_r| n^2 + N(R,V))
\end{equation}
field operations, where $|Q_c|_K$ is the Kendall--tau distance between the permutation $Q_c$ and the identity permutation.
\end{theorem}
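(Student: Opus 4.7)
The plan is to reduce the analysis to that of \Cref{thm:gen_time} for \Cref{alg:gen_update}, since the discussion preceding the statement makes it clear that \Cref{alg:gen_update_cohomology} differs only in a few cohomology-specific adjustments, and the only operation whose cost is not already bounded there is the row product $D_r' V$ in line 9. So the high-level strategy is: account separately for (i) steps that mirror \Cref{alg:gen_update}, and (ii) the one genuinely new piece of work, then sum the contributions.

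First, lines 4--7 perform exactly the same operations as in \Cref{alg:gen_update}: a column permutation of $V$, a row permutation of $R$, a call to \MakeUpperTriangular, and a block deletion. The analysis from \Cref{thm:gen_time} therefore transfers verbatim and contributes $O(\nnz(V)\log n + \nnz(R)\log m + \max\{m,n\}|Q_c|_K)$ field operations. Similarly, the zero-column insertion in line 8, the row insertion in line 9 (ignoring the cost of constructing the rows themselves), and the identity row/column insertion in line 10 are index-shifting operations whose cost on our sparse storage scales as $O(\nnz(V)\,|I_c'| + \nnz(R)\,|I_r'|)$, matching the analogous bounds in \Cref{thm:gen_time}. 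Finally, the reduction at line 11 contributes $N(R,V)$ by hypothesis.

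The one new cost is computing the matrix product $D_r' V$ used in line 9. At this point of the algorithm, $V$ is a square matrix of size at most $n + |I_c'| = O(n)$ (the original $n$ columns, less those deleted, plus newly inserted identity columns), and $D_r'$ has $|I_r'|$ rows. Computed in the naive row-by-row manner, each row of the product requires $O(n^2)$ field operations in the worst case (one inner product of length $O(n)$ per column of $V$), for a total of $O(|I_r'|\,n^2)$ operations. Adding this to the costs above yields the claimed bound.

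The only subtle point is verifying that the row-product cost really is the sole extra contribution: one must check that after the permutation, upper-triangularization, and deletion in lines 4--7, the $V$ used in line 9 still has $O(n)$ dimensions and that the row insertion step does not interact with $V$ beyond the identity-block extension of line 10. This is immediate from the correctness argument preceding the theorem, since the cohomology-specific modifications permute the to-be-deleted block to the (1,1) position and insert the new rows into $R$ without further modifying $V$. A tighter, sparsity-sensitive bound of the form $O\bigl(\sum_i \nnz(D_r'[i])\cdot\max_j\nnz(V[j])\bigr)$ would be possible, but the worst-case bound $O(|I_r'|\,n^2)$ stated in the theorem suffices and is what one obtains by treating $V$ as potentially dense.
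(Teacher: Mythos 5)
Your proposal is correct and follows essentially the same route as the paper: the paper likewise bounds lines 4--10 by transferring the analysis of \Cref{alg:gen_update} verbatim and then accounts for the single new cost, the row products $D'_r V$, at $O(n^2)$ field operations per row for a total of $O(|I'_r|\,n^2)$. Your closing remark about a sparsity-sensitive refinement of that term is a reasonable observation but not part of the paper's argument.
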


\section{Computational Complexity}\label{sec:complexity}

In this section, we will assume that matrices are stored as a collection of sparse column vectors as it is standard in implementations for persistent homology \cite{Phat2017} (often implemented as a vector of vectors in C++).  Each sparse column is stored as an array of pairs of nonzero indices and values ordered by increasing index. This format is well-suited for column operations which take the most time in the standard reduction algorithm \cref{alg:reduction}. 
We use $\nnz(A)$ to denote the number of non-zeros in a column or a matrix $A$.

\subsection{Permuting Filtrations}\label{sec:perm_complexity}

Let $D$ be a $m\times n$ matrix with $DV = R$, and $P_r$, $P_c$ be row and column permutations so that $D' = P_r D P_c$.

Applying row permutations to $V$ and $R$ generally will require us to alter each non-zero index and update the sorting of each column. If we consider the application of a row permutation to a $m\times n$ matrix $A$, altering each nonzero index takes $\nnz(A)$ time, and sorting indices of non-zeros in each column using standard algorithms takes
$O(\sum_{j= 1}^n \nnz(A[j]) \log \nnz(A[j]))$ operations. Bounding $\nnz(A[j])$ by $m$ will give $O(\sum_{j=1}^n \nnz(A[j]) \log m) = O(\nnz(A) \log m)$.  Thus, applying the row permutations in \Cref{alg:perm_update} takes $O(\nnz(V) \log n + \nnz(R) \log m)$.

Next, we analyze the complexity of reducing $V$ after the row permutation of \Cref{alg:perm_update}.  

\begin{algorithm}
\caption{Reduction Algorithm (pHrow)}
\label{alg:reduction pHrow}
\begin{algorithmic}[1]
\Procedure{Reduce}{$A$, $B$}
\State \textbf{Input:} $m \times n$ matrix $A$ and $k \times n$ matrix $B$
\State \textbf{Result:} Reduced matrix $A V'$, and matrix $B V'$, formed in place.

\For{ $i = m,...,1$}
\State indices = $[j \mid \piv(A[j]) = i ]$
\State $p$ = indices[0]
\For{$j = $ indices[1,...] }
\State $\alpha = A[i, j]/A[i, p]$
\State $A[j] = A[j] - \alpha A[p]$
\State $B[j] = B[j] - \alpha B[p]$
\EndFor
\EndFor

\State \textbf{return} $A$, $B$
\EndProcedure
\end{algorithmic}
\end{algorithm}

We first give an alternative proof of Identical Output Theorem in \cite{desilvaDualitiesPersistentCo2011}. It tells us that the number of column operations of pHcol \Cref{alg:reduction} and pHrow \Cref{alg:reduction pHrow} are the same, which can also be counted by the number of duplicate pivots when reducing rows from bottom to top. 

\begin{theorem}[Identical Output Theorem \cite{desilvaDualitiesPersistentCo2011}]
Given a $m\times n$ boundary matrix $D$, the outputs $R_r,V_r$ of pHrow and $R_c, V_c$ of pHcol are the same.
\end{theorem}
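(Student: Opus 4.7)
The plan is to prove by induction on the column index $j$ that the first $j$ columns of pHrow's outputs $(R_r, V_r)$ coincide with those of pHcol's outputs $(R_c, V_c)$; the base case $j = 0$ is vacuous. For the inductive step I would compare the sequences of elementary column operations that each algorithm performs on column $j$, using the outer inductive hypothesis that $R_r[l] = R_c[l]$ and $V_r[l] = V_c[l]$ for every $l < j$.

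In pHcol, column $j$ is reduced via a deterministic chain of subtractions $A[j] \gets A[j] - \alpha_s R_c[k_s]$ for $s = 0, 1, \ldots$, where $i_s$ denotes the current pivot of $A[j]$ and $k_s < j$ is the unique earlier, already reduced column with $\piv(R_c[k_s]) = i_s$; the chain terminates when no such $k_s$ exists or $A[j]$ becomes zero. My aim is to show that pHrow performs this same chain on column $j$, merely interleaved with modifications to other columns.

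The three claims to verify are: (i) after the $s$-th reduction of $A[j]$ (or initially when $s = 0$), the current pivot of $A[j]$ is $i_s$, which does not match any row index strictly above $i_s$ that pHrow still has to process, so $A[j]$ is untouched until pHrow reaches row $i_s$; (ii) when pHrow processes row $i_s$, the leftmost column with current pivot $i_s$ is precisely $k_s$; and (iii) at the instant of this subtraction, $A[k_s]$ already equals its final state $R_r[k_s] = R_c[k_s]$, because $k_s$ is leftmost at row $i_s$ (hence unmodified there) and its pivot $i_s$ will not reappear at any lower-indexed row. Combining these, pHrow performs the same subtraction with the same scalar $\alpha_s$; by induction on $s$ the two chains evolve identically and terminate at the same state, so $R_r[j] = R_c[j]$. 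An identical argument applied to the parallel updates of $V$ delivers $V_r[j] = V_c[j]$.

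The main obstacle is claim (ii), which I would prove by contradiction. Suppose some $p < k_s$ has current pivot $i_s$ at the moment pHrow processes row $i_s$. If $p$ is leftmost among columns with that pivot, then $p$ is unchanged at row $i_s$ and its final pivot in $R_r$ is $i_s$; the outer IH gives $R_r[p] = R_c[p]$, so $R_c$ would contain two columns ($p$ and $k_s$) with pivot $i_s$, contradicting the reduced structure of $R_c$. Otherwise $p$ itself is reduced at row $i_s$, which forces some strictly smaller $p' < p$ to also have current pivot $i_s$; iterating yields a leftmost $p^* < k_s$ whose final pivot is $i_s$, and the same contradiction. Once (ii) is secured the induction closes and both algorithms produce identical outputs.
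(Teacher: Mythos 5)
Your proof is correct, but it is organized dually to the paper's. The paper inducts on \emph{rows} from bottom to top: it groups pHcol's column operations by the row at which the pivot collision occurs and asserts that, row by row, both algorithms face the same nonzero structure and hence perform the same eliminations. You instead induct on \emph{columns} from left to right, showing that the deterministic reduction chain pHcol applies to column $j$ is reproduced verbatim by pHrow, merely interleaved with work on other columns. Your claims (ii) and (iii) --- that the leftmost column pHrow selects at row $i_s$ is exactly the unique already-reduced column $k_s$ with final pivot $i_s$, and that $k_s$ is already in its final state at that instant --- are precisely the points the paper's proof elides with ``they will perform identical duplicate pivots reduction,'' and your contradiction argument via uniqueness of pivots among $R_c[1],\dots,R_c[j-1]$ supplies the missing justification. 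What your route buys is rigor and a clean invariant (the per-column operation sequences, including the scalars $\alpha_s$, coincide); what the paper's row-wise route buys is brevity and a direct link to the row-by-row pivot counting reused in \Cref{prop:V_dup_pivots}. One small step worth making explicit: before arguing that nothing to the left of $k_s$ carries pivot $i_s$, you should note that $k_s$ itself does carry current pivot $i_s$ when pHrow reaches row $i_s$; this follows because its final pivot is $i_s$ by the outer inductive hypothesis, pivots only decrease under pHrow, and any column still holding a pivot $\ell > i_s$ after row $\ell$ has been processed retains that pivot to the end. The same observation also closes the terminal case, guaranteeing that once column $j$'s chain ends, pHrow never touches it again.
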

\begin{proof}

We will look at what pHcol will do to reduce duplicate pivots in each row. 
We prove the theorem by induction. Starting from the last row $m$, because the non-zero structure is the same for pHrow and pHcol, they will perform identical duplicate pivots reduction in the row $m$.
Thus, the two algorithms create the same non-zero structure in row $m-1$ to reduce. Furthermore, reducing pivots in this structure entails all column operations in pHcol that are used to reduce duplicate pivots in row $m-1$, because reducing pivots in row $m-1$ comes either from reducing pivots below $m-1$ or from the original boundary matrix.
Then we repeat the above process until the first row. 
Since all column operations are the same, the two algorithm produces identical outputs. 
\end{proof}

Let $\pi$ be the permutation represented by $P^T$ so that row $\pi(i)$ of $V$ is permuted to row $i$ in the multiplication $P^T V$.
\begin{proposition}\label{prop:V_dup_pivots}
Let $V$ be an $n\times n$ upper-triangular matrix and $P$ be an $n\times n$ permutation matrix. Then, in $P^T V$, the maximum number of pivots that must be eliminated is $|P|_K$.
\end{proposition}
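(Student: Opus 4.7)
The plan is to prove the bound by induction on $n$, using the pHrow formulation in \Cref{alg:reduction pHrow}; by the Identical Output Theorem this performs the same number of column operations as pHcol on $P^T V$. Processing rows from the bottom allows me to isolate one row at a time and recurse on a strictly smaller instance of the same shape.

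Set $a := \pi(n)$, the row of $V$ that gets moved to row $n$ of $P^T V$. Because $V$ is invertible upper-triangular, its row $a$ satisfies $V[a,a]\ne 0$ and $V[a,j]=0$ for $j<a$; hence row $n$ of $P^T V$ can be nonzero only in columns $\{a, a+1,\dots, n\}$. Column $a$ is the leftmost such column, and every column whose pivot sits in row $n$ lies weakly to its right. The pHrow sweep for row $n$ therefore subtracts a multiple of column $a$ from some subset of the columns $a+1,\dots,n$, incurring at most $n-a$ column operations.

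I would then record these eliminations as $P^T V T$, where $T$ is the unit upper-triangular matrix whose only non-identity entries lie in its $a$-th row. A direct check shows $VT$ is again upper-triangular, and by construction of the multipliers $\alpha_j = V[a,j]/V[a,a]$, row $a$ of $VT$ has its single remaining nonzero at the diagonal. After deleting row $n$ and column $a$ from $P^T V T$, the resulting $(n-1)\times(n-1)$ submatrix has the form $(P')^T V'$, where $V'$ is the invertible upper-triangular matrix obtained from $VT$ by deleting row and column $a$, and $P'$ is the permutation matrix for $\pi':[n-1]\to[n-1]$ defined by $\pi'(i) = \pi(i)$ if $\pi(i)<a$ and $\pi'(i) = \pi(i)-1$ if $\pi(i) > a$. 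Continuing pHrow on the submatrix is identical to running pHrow on $(P')^T V'$, so by the inductive hypothesis it uses at most $|P'|_K = |\pi'|_K$ column operations.

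A short combinatorial count identifies $|\pi|_K - |\pi'|_K$ with the number of $i<n$ satisfying $\pi(i) > \pi(n) = a$; since $\pi$ restricted to $[n-1]$ takes every value in $[n]\setminus\{a\}$ exactly once, this count is $n-a$. Summing the row-$n$ contribution and the inductive contribution gives at most $(n-a)+|\pi'|_K = |\pi|_K = |P|_K$ column operations, completing the induction (the base case $n=1$ is immediate). The main obstacle is the structural claim that $VT$ stays upper-triangular with row $a$ cleared, since this is what lets the deleted submatrix inherit the permuted-upper-triangular form needed to invoke the inductive hypothesis; once this is established the rest is bookkeeping.
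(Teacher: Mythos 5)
Your proof is correct and rests on the same counting idea as the paper's: row $n$ of $P^T V$ forces at most $n-\pi(n)$ eliminations, and the columns whose pivots are already settled never recur, so the per-row costs telescope to the inversion count of $\pi$. The only difference is bookkeeping — you package the correction for already-reduced columns into the recursion $|\pi|_K = |\pi'|_K + (n - \pi(n))$ via an explicit peel-off induction (which makes the fill-in and submatrix-structure claims more airtight), whereas the paper bounds each row by $n-\pi(i)-\sum_{j>i}\mathbb{I}\{\pi(j)>\pi(i)\}$ and sums directly.
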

\begin{proof}
Because column operations in \Cref{alg:reduction} only add columns with smaller index to columns with larger index, reducing $P^T V$ will not introduce any non-zeros to the left of $\pi(i)$ in row $i$, and the location of potential non-zeros cannot change.  As a result, any column operations used to eliminate pivots in rows $i'> i$ do not affect the bound of number of duplicate pivots in row $i$. 

We start at the final row $i=n$.  In this case, all pivots to the right of $\pi(n)$ will be eliminated, which is a total of $n - \pi(n)$ potential duplicate pivots.  Now, consider an arbitrary row $i$.  There are potentially $n - \pi(i)$ duplicate pivots to eliminate.  However, any row $i'>i$ with $\pi(i') > \pi(i)$ will have the first non-zero entry in the column $\pi(i')$ that is to the right of $\pi(i)$, so the column $\pi(i')$ is already reduced, and we do not need to perform pivot elimination in row $i$. Thus the total number of eliminations in this row is at most
\begin{equation}
    n - \pi(i) - \sum_{j = i+1}^n \II\{\pi(j) > \pi(i)\},
\end{equation}
where $\II\{\pi(j) > \pi(i)\}$  is the indicator function on $j$.

Summing over all rows, the number of pivots to be eliminated is at most
\begin{align}
    &\sum_{i=1}^n \bigg(n - \pi(i) - \sum_{j=i+1}^n \II\{\pi(j) > \pi(i)\}\bigg)\\
    =&\sum_{i'=0}^{n-1} i'  - \sum_{i=1}^n \sum_{j=i+1}^n \big(1 - \II\{ \pi(j) < \pi(i)\}\big)\label{eq:trans1}\\
    =&\sum_{i'=0}^{n-1} i'  - \sum_{i'=0}^{n-1} i' + \sum_{i=1}^n \sum_{j=i+1}^n \II\{ \pi(j) < \pi(i)\}\label{eq:trans2}\\
    =& \sum_{i=1}^n \sum_{j=i+1}^n \II\{ \pi(j) < \pi(i)\}\label{eq:trans3}\\
    =& |P|_K\label{eq:trans4}
\end{align}
The transformation to \Cref{eq:trans1} comes from taking $i' = n-\pi(i)$ and re-ordering the sum, and using $\II\{\pi(j) > \pi(i)\} = 1 - \II\{ \pi(j) < \pi(i) \}$.  Then, the transformation to \Cref{eq:trans2} uses $\sum_{j=i+1}^n 1 = n - i$ and $\sum_{i=1}^{n} (n-\pi(i)) = \sum_{i=1}^{n} (n-i) $, and the two sums over $i'$ cancel in \Cref{eq:trans3}.  Finally, we note that $\sum_{j=i+1}^n \II\{\pi(j) < \pi(i)\}$ is a sum over the number of elementary transpositions in $P$ which move a row $\pi(j)$ past row $\pi(i)$, and we sum over all transpositions in the permutation (counted at the index $i$ where $\pi(i)$ is moved forward) to give us $|P|_k$.
\end{proof}

\begin{proposition}\label{complexity:MakeUpperTrian}
Let $V$ be an invertible upper-triangular $n\times n$ matrix and $R$ be a $m\times n$ matrix.  Then $\MakeUpperTriangular(P_c^T V, P_r R)$ (\Cref{alg:make_upper_triangular}) takes 
\begin{equation}
    O(\max\{m,n\}|P_c|_K + n)
\end{equation}
 time.
\end{proposition}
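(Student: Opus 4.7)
The plan is to bound the two phases of \Cref{alg:make_upper_triangular} separately: the call to \Reduce on the pair $(P_c^T V, P_r R)$ in line 4, and the column-swap for-loop in lines 5--10.

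For the reduction phase, I would invoke \Cref{prop:V_dup_pivots} directly: the input $P_c^T V$ is an upper-triangular $n\times n$ matrix with rows permuted by $P_c^T$, so the total number of duplicate-pivot eliminations required to reduce it is at most $|P_c|_K$. Each such elimination corresponds to a single column addition inside $V$, accompanied by the mirrored column addition inside $R$ (to preserve the invariant of \Cref{lem:maintain_factorization}). In the sparse-column data structure assumed in \Cref{sec:complexity}, one column addition on $V$ costs $O(n)$ and one on $R$ costs $O(m)$ in the worst case, since a single column can have at most as many nonzeros as its row dimension. Multiplying, the call to \Reduce costs $O((m+n)|P_c|_K) = O(\max\{m,n\}|P_c|_K)$.

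For the for-loop phase, I would argue that it runs in $O(n)$ total. Since $V$ is $n\times n$ and invertible, after reduction its columns carry $n$ distinct pivots covering all row indices; this lets me precompute an inverse-pivot table $\sigma$ with $\sigma[i]=j'$ iff $\piv(V[j'])=i$ by a single linear sweep in $O(n)$ time. Each iteration of the loop then performs an $O(1)$ pivot lookup to find $j'$ and two column swaps (one on $V$, one on $R$). Using the vector-of-vectors layout, a column swap is an $O(1)$ pointer swap that preserves intra-column sortedness. Updating $\sigma$ after the swap is also $O(1)$, and once a column has been moved into its target position it is never touched again. Summed over all $j$, this phase costs $O(n)$.

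Adding the two contributions yields the claimed bound $O(\max\{m,n\}|P_c|_K + n)$. The main subtlety lies in the second phase: a naive implementation that scans for $j'$ at each iteration would cost $\Theta(n)$ per step and inflate the phase to $\Theta(n^2)$, swamping the first term whenever $|P_c|_K$ is small. The $O(1)$-amortized pivot bookkeeping sketched above is what keeps the additive overhead linear in $n$, and this is the only delicate part of the argument; everything else reduces to applying \Cref{prop:V_dup_pivots} and counting the cost of a column operation against the appropriate dimension.
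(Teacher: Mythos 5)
Your proof is correct and follows essentially the same route as the paper's: invoke \Cref{prop:V_dup_pivots} to bound the number of duplicate-pivot eliminations by $|P_c|_K$, charge $O(\max\{m,n\})$ per elimination for the paired column operations on $V$ and $R$, and then argue the final column permutation costs $O(n)$ via pointer swaps. Your explicit inverse-pivot table for the $O(1)$ lookup of $j'$ is a detail the paper leaves implicit, but it does not change the argument.
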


\begin{proof}
From \Cref{prop:V_dup_pivots}, we must eliminate $|P_c|_K$ pivots in $P_c^T V$, each of which incurs one column operation each on $V$ and $R$, which takes $O(\max\{m,n\})$ time. It is equivalent to transform $P_c^T V$ to $P_c^T V \tilde{V}$, where $\tilde{V}$ records those column operations.  Then the columns of the matrix $P_c^T V \tilde{V}$ are sorted to be in increasing pivot order, which is accomplished in $O(n)$ time by swapping column pointers.
\end{proof}

\Cref{complexity:MakeUpperTrian} demonstrates that our \Cref{alg:make_upper_triangular} until line 6 is as good as the algorithm of \cite{vinesvineyards06}, which they claim takes a linear time complexity for an elementary transposition. 

We are now left to consider the time complexity of reducing the matrix $R$ in line 7 of \Cref{alg:perm_update}. 
Unfortunately, we are not able to provide a bound better than the cubical time complexity of the standard reduction \Cref{alg:reduction}. 
The difficulty comes from that $R$ experiences three multiplications before reduction:
$P_r^T R \tilde{V} \tilde{P}$, where $\tilde{V}$ comes from the reduction on $P_c^T V$ and $\tilde{P}$ comes from the column permutation. Furthermore, non-zeros of $R$ are not right aligned as the upper-triangular matrix $V$, so adding columns left to right will probably introduce new non-zeros. 

Thus, we conclude that \Cref{alg:perm_update}, excluding line 7, takes $O(\nnz(V) \log n + \nnz(R) \log m + \max\{m,n\}|P_c|_K)$.

\subsection{Addition and Deletion of Cells}\label{sec:general_complexity}

An analysis of \Cref{alg:gen_update} begins similarly.  Again, we apply row permutations for a cost of $O(\nnz(V) \log n + \nnz(R) \log m)$, and it is straightforward to extend the analysis of \Cref{sec:perm_complexity} to the reduction of $V$ in line 6, for a cost of $ O(\max\{m,n\}|Q_c|_K)$. 

Modifying the size of the matrix introduces additional considerations. Because we use a vector of sparse columns to store the matrix, deleting the final $|k_c|$ columns of $V$ and $R$ takes constant time.  Furthermore, because after we have deleted these columns in $V$ there are no non-zeros in the last $|k_r|$ rows, deleting these rows does not affect any entries of the remaining columns so can be done in constant time. Inserting rows of zeros in $R$ and $V$ potentially requires us to modify all non-zero indices in all columns, so inserting $|I'_r|$ rows into $R$ and $I'_c$ into $V$ may cost $O(|I'_r|\nnz(R))$ and $O(|I'_c|\nnz(V))$ operations respectively. Inserting columns can be done in $O(n)$ time by inserting pointers. 
Thus,  \Cref{alg:gen_update} except the final reduction (line 11) takes $O(\nnz(V) (\log n + |I'_c|) + \nnz(R) (\log m + |I'_r|)+ \max\{m,n\}|Q_c|_K)$.

Again, this bound is pessimistic due to sparsity in the matrices $V$ and $R$. In addition, note that if we update from an empty complex, then $|I_c'| = n$ and $O(\nnz(R) |I'_r|)) = O(m n \max(m,n))$, which is the same as the bound of \Cref{alg:reduction}.

\subsection{Cohomology}
The only difference between the complexity of \Cref{alg:gen_update_cohomology} and \Cref{alg:gen_update} is in line 9, where we insert (co)boundary vectors which requires the computation of rows $D'_r V$. For $|I'_r|$ rows, the additional cost is $O( |I'_r| n^2 )$. Thus,  \Cref{alg:gen_update_cohomology} excluding the final reduction (line 11) takes $O(\nnz(V) (\log n + |I'_c|) + \nnz(R) (\log m + |I'_r|)+ \max\{m,n\}|Q_c|_K +  |I'_r| n^2)$.

\subsection{Final Reduction}
In \Cref{thm:perm_time,thm:gen_time,thm:cohom_time}, we add an additional term for the final reduction of $R$ after permutations and insertions have been handled. This final reduction has a worst-case complexity that is identical to the reduction from scratch.  However, the performance of the reduction algorithm is sensitive to the input problem, \cite{Ripser19} and in our experiments we see noticeable speedups.

In contrast, \cite{vinesvineyards06} gives a linear-time bound in terms of the Kendall--tau distance of the permutation from the identity, which is achievable because each elementary transposition causes an update before the next elementary permutation is applied.  Our approach is fundamentally different because handle the permutation in a single batch and the number of non-zeros to be eliminated can grow non-linearly in the number of elementary transpositions. However, while \cite{vinesvineyards06} has a tighter asymptotic bound, our method is amenable to parallelism in the reduction as employed in \cite{bauerClearCompressComputing2014,HYPHA19,zhang2020gpu} and can be performed with data structures implemented in many existing persistent homology libraries.

\section{Examples and Experiments}

Our implementation has been incorporated into the Basic Applied Topology Subprograms (BATS) \cite{factorizationView2019} (\url{https://github.com/CompTop/BATS}) library, which provides a standard data structure for representing a matrix using a collection of columns as well as a variety of options for computing persistent homology including the standard reduction algorithm as well as the clearing \cite{chenPersistentHomologyComputation2011,desilvaDualitiesPersistentCo2011} optimization.  This allows us to compare to several algorithmic options without needing to account for implementation-specific variation.  We also compare to the more highly optimized Gudhi \cite{GUDHI15} and Ripser \cite{Ripser19} packages as well as the commonly used Dionysus library \cite{Dionysus2}.  These packages are all comparable using Python bindings for compiled C++ code (for Ripser, we use the bindings at  \url{https://ripser.scikit-tda.org}).  Our timing results are computed using single processes on machines with Intel Xeon 6248R processors and 16GB of available random access memory.

\subsection{Sub-level Set Filtrations}\label{sec:levelset}

One common filtration used in topological data analysis is obtained through sub-level sets of a function on a topological space. Given a function $f:X\to \mathbb{R}$ we denote a sub-level set as $X_a = f^{-1}((-\infty,a])$, and we consider a filtration via the inclusions $X_a \subseteq X_b$ if $a < b$.  An application of this type of filtration is to single channel images, where an image is considered as a pixel intensity function on a $m\times n$ grid which is extended to a filtration on a cubical complex or a simplicial complex via the Freudenthal triangulation.

We investigate level set persistence using several real and synthetic 2-dimensional image data sets:
\begin{enumerate}
\item \textbf{MNIST} \cite{lecun-mnisthandwrittendigit-2010}:  
A collection of handwritten digit images contains a training set of 60,000 examples, and a test set of 10,000 examples. Each image is $28 \times 28$ pixels. As a default we consider computing persistent homology of each image as an update of a pixel-wise averaged image of the same size.

\item \textbf{Vert-64}: A 3-dimensional rotational angiography scan of a head with an aneurysm used for benchmarking persistent homology in \cite{otterRoadmapComputationPersistent2017}. This data set is a 3-dimensional array of size $512 \times 512 \times 512$, and each voxel is a single real-valued number. We obtained the data set from the repository \cite{volvis}.  In our experiments, we subsample the data to form a $64\times 64\times 64$ image due to the memory overhead of forming the basis $V$.  Our update tests perturbation of the pixels by random noise with mean 0 and variance $0.01$.

\item \textbf{S2D($\sigma$)} (sinusoid-2D): A synthetic $128 \times 128$ image $A$ defined as $A[i,j] = \sin(10\pi i/128) + \cos(10\pi j/128)$.  The updated image adds normally distributed random noise with mean 0 and variance $\sigma$.

\item \textbf{S3D($\sigma$)} (sinusoid-3D): A 3-dimensional analog of the S2D($\sigma$) data on a $32\times 32 \times 32$ cube.  In this case, $A[i,j,k] = \sin(4\pi i/32) + \cos(4\pi j/32) + \sin(4\pi k/32)$.
\end{enumerate}

Persistent homology is often used as a feature generation technique. In the case of images, this requires computation of persistent homology for each image in the data set, which can be  a performance bottleneck in part due to implementation and algorithmic complexity and in part due to lack of hardware acceleration seen in more popular image processing techniques such as convolutions.  We will use the MNIST handwritten digit dataset as an example as it readily admits an interpretation of topological features. For example, an image of the digit ``0'' typically has a robust connected component ($H_0$ bar) and a single robust hole ($H_1$ bar), although smaller features may appear due to variations in pixel intensity (e.g. from variations in how hard a pen was pressed down when writing the digit, or from noise in the digitization process).  

\begin{table}[h]
    \centering
\begin{tabular}{|c||c|c|c|c|c||c|}
\hline
&$d_K$& Extension & Build $D$ & Reduction & Update & Total\\
\hline\hline
 Full
& --
& $5.9 \times 10^{-4}$
& $3.6 \times 10^{-4}$
& $1.1 \times 10^{-3}$
& --
& $2.1 \times 10^{-3}$\\\hline
 Image init.
& 0.19
& $6.3 \times 10^{-4}$
& --
& --
& $8.8 \times 10^{-4}$
& $\mathbf{1.5 \times 10^{-3}}$\\\hline
 Avg. init.
& 0.4
& $6.4 \times 10^{-4}$
& --
& --
& $1.2 \times 10^{-3}$
& $1.8 \times 10^{-3}$\\\hline
 Zero init.
& 0.15
& $5.9 \times 10^{-4}$
& --
& --
& $1.2 \times 10^{-3}$
& $1.8 \times 10^{-3}$\\\hline
 Noise init.
& 0.49
& $6.4 \times 10^{-4}$
& --
& --
& $2.1 \times 10^{-3}$
& $2.7 \times 10^{-3}$\\\hline
\end{tabular}

\caption{Average time to compute persistent homology of 1000 MNIST images by updating different reference images.  The Extension column gives the time to extend the filtration on pixels to a filtration on the complex.  The ``Full'' row performs a new reduction every time, using clearing and without forming $V$.  For the update experiments the decomposition is initialized in different ways. ``Image init.'' uses a randomly selected image; ``Avg. init.'' uses the pixel-wise average image; ``Zero init.'' uses a pixel-wise constant (0) image; ``Noise init.'' uses pixel values drawn i.i.d. from a normal distribution.  The column $d_K$ gives the average normalized Kendall--tau distance to initial filtration.}
\label{tab:mnist_features}
\end{table}

\begin{table}[h]
\centering

\begin{tabular}{|c|c||c|c|c|c|c|c|}
\hline
& & \multicolumn{1}{c|}{MNIST} & \multicolumn{1}{c|}{Vert-64} & \multicolumn{1}{c|}{S2D(0.01)} & \multicolumn{1}{c|}{S2D(0.1)} & \multicolumn{1}{c|}{S3D(0.01)} & \multicolumn{1}{c|}{S3D(0.1)}\\
\hline
\hline
\parbox[t]{2mm}{\multirow{7}{*}{\rotatebox[origin=c]{90}{Freudenthal}}}
& $d_K$
& $1.9 \times 10^{-1}$ %0.19
& --
& $3.3 \times 10^{-3}$ %0.0033
& $2.9 \times 10^{-2}$ %0.029
& $1.4 \times 10^{-2}$ %0.014
& $3.0 \times 10^{-2}$ %0.03
\\\cline{2-8}
& Ripser
& $2.2 \times 10^{-3}$
& --
& $4.0 \times 10^{-2}$
& $\mathbf{4.3 \times 10^{-2}}$
& --
& --
\\\cline{2-8}
& Dionysus
& $3.0 \times 10^{-3}$
& --
& $1.2 \times 10^{-1}$
& $1.3 \times 10^{-1}$
& $1.9 \times 10^{0}$
& $1.9 \times 10^{0}$
\\\cline{2-8}
& Gudhi
& $4.6 \times 10^{-3}$
& --
& $1.9 \times 10^{-1}$
& $2.0 \times 10^{-1}$
& $2.2 \times 10^{0}$
& $2.2 \times 10^{0}$
\\\cline{2-8}
& BATS(c)
& $2.1 \times 10^{-3}$
& --
& $6.6 \times 10^{-2}$
& $6.9 \times 10^{-2}$
& $8.1 \times 10^{-1}$
& $\mathbf{8.7 \times 10^{-1}}$
\\\cline{2-8}
& BATS(u,s)
& $\mathbf{1.5 \times 10^{-3}}$
& --
& $\mathbf{3.9 \times 10^{-2}}$
& $6.1 \times 10^{-2}$
& $\mathbf{6.9 \times 10^{-1}}$
& $9.8 \times 10^{-1}$
\\\cline{2-8}
& BATS(u,c)
& $\mathbf{1.5 \times 10^{-3}}$
& --
& $\mathbf{3.9 \times 10^{-2}}$
& $6.0 \times 10^{-2}$
& $7.0 \times 10^{-1}$
& $1.0 \times 10^{0}$
\\\hline\hline
\parbox[t]{2mm}{\multirow{5}{*}{\rotatebox[origin=c]{90}{Cubical}}}
& $d_K$
& $2 \times 10^{-1}$ %0.2
& $4.5 \times 10^{-2}$ %0.045
& $3.3 \times 10^{-3}$ %0.0033
& $3.0 \times 10^{-2}$ %0.03
& $1.4 \times 10^{-2}$ %0.014
& $3.0 \time 10^{-2}$ %0.03
\\\cline{2-8}
& Gudhi
& $2.7 \times 10^{-3}$
& $2.7 \times 10^{0}$
& $3.0 \times 10^{-2}$
& $3.3 \times 10^{-2}$
& $\mathbf{2.0 \times 10^{-1}}$
& $\mathbf{2.1 \times 10^{-1}}$
\\\cline{2-8}
& BATS(c)
& $2.2 \times 10^{-3}$
& $4.1 \times 10^{0}$
& $6.1 \times 10^{-2}$
& $7.3 \times 10^{-2}$
& $4.4 \times 10^{-1}$
& $4.7 \times 10^{-1}$
\\\cline{2-8}
& BATS(u,s)
& $\mathbf{1.2 \times 10^{-3}}$
& $1.3 \times 10^{1}$
& $\mathbf{2.1 \times 10^{-2}}$
& $\mathbf{3.2 \times 10^{-2}}$
& $2.1 \times 10^{-1}$
& $2.8 \times 10^{-1}$
\\\cline{2-8}
& BATS(u,c)
& $\mathbf{1.2 \times 10^{-3}}$
& $\mathbf{2.1 \times 10^{0}}$
& $\mathbf{2.1 \times 10^{-2}}$
& $\mathbf{3.2 \times 10^{-2}}$
& $2.2 \times 10^{-1}$
& $2.7 \times 10^{-1}$
\\\hline
\end{tabular}

\caption{Average time in seconds to recompute or update persistent homology of super-level set filtrations on synthetic and real data, using either Cubical complexes or the Freudenthal triangulation of a grid. $d_K$ is the normalized Kendall--tau distance between the initial and updated filtrations averaged over experiments.  Ripser \cite{Ripser19}, Dionysus \cite{Dionysus2}, GUDHI \cite{GUDHI15}, and BATS(c) \cite{BATS} recompute persistent homology.  Gudhi and Ripser both use cohomology, and Dionysus and BATS both use homology.  BATS(c) uses clearing and does not form the basis $V$.  BATS(u,s) updates the $RU$ decomposition from the standard reduction algorithm and BATS(u,c) updates the $RU$ decomposition obtained from clearing.  Compared to \Cref{tab:mnist_features}, recompute times include the steps of extension, build $D$, and reduction, and update times include extension and update. Timings are averaged over 1000 updates for MNIST (using Image init. for the updating schemes), 1 update for Vert-64, 100 updates for S2D columns, and 20 updates for S3D columns.  Timings for the Freudenthal triangulation of the Vert-64 data set are excluded due to memory constraints.}
\label{tab:levelset_comp}. 
\end{table}

In \Cref{tab:mnist_features}, we measure the average time to compute persistent homology in dimensions 0 and 1 on 1000 random MNIST images using a 2-dimensional Freudenthal triangulation of the $28 \times 28$ grid for a total of 784 0-simplices, 2241 1-simplices, and 1458 2-simplices. We use a single initial filtration which is updated for each image.  Overall, our update scheme gives almost a 3x speedup compared to a full persistent homology computation.  We observe that initializing with an actual image produces slightly faster updates when compared to an ``average image" produced by averaging each pixel value over the data set or a constant ``zero image".  Note that even initializing with the constant image gives a large speedup.  Because MNIST digits have a constant background using this constant image for initialization is advantageous because much of the factorization can be reused over this constant region.  We also measure the time to update the persistent homology of an ``image'' generated from random pixel values, which still gives a noticeable speedup.  We can use this as a baseline to determine how much of the speedup using a representative image for initialization is due to memory and implementation efficiency and how much is due to the cost of updating persistent homology from a good starting point vs. a bad starting point.

In \Cref{tab:levelset_comp} we measure the time needed to compute persistent homology on a variety of data, either from scratch or using our update scheme.  On all the spaces built on the Freudenthal triangulation of a grid, our update scheme demonstrates a noticeable improvement in run time, and for cubical complexes we outperform Gudhi on smaller and simpler updates, and are slightly outperformed on larger problems and updates.  We also note that Dionyusus has a built-in function for the Freudenthal triangulation of an image whereas Gudhi does not, so the better performance of Gudhi on persistent homology computations is offset by the need to construct the filtration in Python.  We report the results of the clearing optimization in BATS - compression tends to perform slightly worse on these examples.

\subsection{Vietoris--Rips Filtrations}\label{sec:geom_filtration}

Vietoris--Rips filtrations (or simply Rips filtrations) are commonly used in conjunction with persistent homology to create features for finite dimensional metric spaces (point clouds).  Given a metric space $(X, d)$, a Rips complex consists of simplices with a maximum pairwise distance between vertices is less than some threshold $r$:
\begin{equation}
X_r = \{(x_0,\dots,x_k) \mid x_i\in X, d(x_i,x_j) \le r\}.
\end{equation}
A Rips filtration is a filtration of Rips complexes $X_r \subseteq X_s$ if $r \le s$.

The number of simplices in Rips filtrations quickly grows with the size of the data set, and much effort has gone into developing efficient algorithms for computing persistent homology of Rips filtrations.  While it is possible to use an approach such as that done in \Cref{sec:levelset} which is to update every simplex in a filtration, several high-performance packages for Rips computations \cite{Ripser19, Eirene16} stop a filtration at the \emph{enclosing radius} of the metric space, at which point the complex becomes contractible, which can reduce the total number of simplices in the filtration considerably without changing persistent homology.  In order to combine this optimization with our approach, it is necessary to be able to add and remove simplices from filtrations as well as permute their filtration order as in \Cref{alg:gen_update}.

\subsubsection{Updates on different data sets}
We list all data sets used in our experiments below, including synthetic data sets (\ref{ds:sphere})(\ref{ds:Klein3}) and empirical measurements and experiments (\ref{ds:Bunny})(\ref{ds:Dragon})(\ref{ds:H3N2}).

\begin{enumerate}
\item \textbf{Sphere1} and \textbf{Sphere2} \label{ds:sphere}: We first randomly generate two data sets, where each with 200 points on $S^1 \subset \mathbb{R}^2$ and on $S^2 \subset \mathbb{R}^3$, and next add normal noise with standard deviation $0.001$ to them. We update persistence from unnoised spheres.

\item \textbf{Eight}: We randomly generate a figure 8 with 200 points in $ \mathbb{R}^2$ and add normal noise with standard deviation $0.001$(See \Cref{fig:data 8}). We measure the performance our updating scheme after the perturbation of noise scale.

\item \textbf{Klein3} \label{ds:Klein3}: The data set was introduced in \cite{otterRoadmapComputationPersistent2017}, which samples 400 points from the Klein bottle using its “figure-8” immersion in $\mathbb{R}^3$. We randomly re-sample 100 points from it and test our updating scheme on perturbation by normal noise with standard deviation $0.01$. 

\item \textbf{Bunny}\label{ds:Bunny}: The Bunny model comes from the Stanford Computer Graphics Laboratory \cite{Stanford3D}. We use one of its 3D scan picture with size 40256 points in $\mathbb{R}^3$ and (uniform) randomly sample 100 points. Our updating scheme test on perturbation by normal noise with standard deviation $0.01$.
 
\item \textbf{Dragon}\label{ds:Dragon}: It is a 3-dimensional scan of a dragon from the Stanford Dragon graphic model \cite{Stanford3D} and in \cite{otterRoadmapComputationPersistent2017} consists of 1000 and 2000 points sampled uniformly at random. We randomly re-sample 400 points from the 1000 points and test our updating scheme on perturbation by normal noise with standard deviation $0.01$. 

\item \textbf{H3N2}\label{ds:H3N2}: The data set from \cite{otterRoadmapComputationPersistent2017} contains 2722 different genetic sequences of H3N2 influenza, where each sequence is a vector in $\mathbb{R}^{1173}$. There are many genetic metrics used to measure the difference between two genetic sequences, but we will focus on the Euclidean metric and encourage readers to try on different ones. We randomly sample 200 points and and test our updating scheme on perturbation by normal noise with standard deviation $0.01$. 

\end{enumerate}

\begin{figure}[h]
  \centering
  \includegraphics[width=0.4\linewidth]{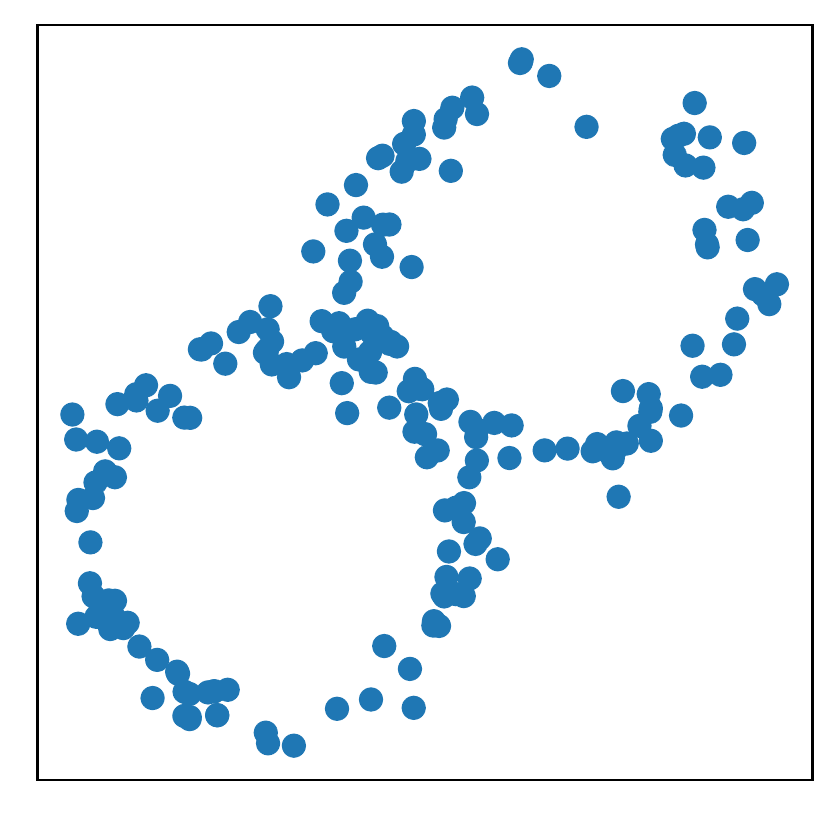}
  \caption{Data set \textbf{Eight}}
  \label{fig:data 8}
\end{figure}

\begin{table}[h]
    \centering
    \begin{tabular}{|c||c|c|c|c|c|c|}
    \hline
    & Sphere1 & Sphere2 & Klein3 & Dragon & Bunny & H3N2\\
    \hline
    max. PH & 1 & 2 & 2 & 1 & 2 & 1 \\
    \hline
    $d_K$ & $4.7 \times 10^{-3}$  & $4.4 \times 10^{-3}$ & $9.6 \times 10^{-4}$ & $2.2 \times 10^{-2}$ & $1.5 \times 10^{-2}$ & $5.8 \times 10^{-3}$\\
    \hline
    Add & $1.7 \times 10^{-3}$ & $9.4 \times 10^{-4}$ & 0 & $2.1 \times 10^{-2}$ & $9.3 \times 10^{-4}$ & $2.8 \times 10^{-4}$ \\
    \hline
    Del & $3.2 \times 10^{-2}$ & $1.2 \times 10^{-2}$ & 0 & $7.1 \times 10^{-3}$ &  $1.7 \times 10^{-2}$& $7.5 \times 10^{-4}$ \\
    \hline
    \hline
BATS(c,enc) & 1.96 & 200.17 & 1.30 & \textbf{0.41} & 2.07 & 0.92 \\
    \hline
BATS(c,b,enc) & 2.08 & 215.45 & 1.46 & 0.48 & 2.18 & 1.04 \\
	\hline
BATS(u,c,enc) & 1.95 & 242.03 & \textbf{0.96} & 0.52 & \textbf{1.95} & 1.83 \\
    \hline
BATS(u,c,full) & \textbf{0.82} & \textbf{62.72} & 2.94 & 1.17 & 3.60 & \textbf{0.83} \\
	\hline
	\hline
Gudhi & 0.60 & 60.59 & 0.87 & 0.26 & 1.54 & 0.59\\
	\hline
Ripser & \textbf{0.12} & \textbf{2.52} & \textbf{0.08} & \textbf{0.02} & \textbf{0.09} & \textbf{0.03}\\
	\hline
    \end{tabular}
    \caption{Average time in seconds to recompute or update persistent homology of Rips filtrations on different data sets. The first four rows are filtration information. `max. PH' is the maximum persistent homology dimension we compute up to. $d_K$ is the normalized Kendall–tau distance between the initial and updated filtrations. `Add' and `Del' are the fractions of the number of cells added to and deleted from the original filtrations divided by the size of original filtrations. The following 4 rows are different algorithms that are implemented in BATS and then the final two rows are GUDHI \cite{GUDHI15} and Ripser \cite{Ripser19}. All algorithms use cohomology to compute persistent homology. BATS(c,enc) uses clearing, does not form the basis $V$ and stops the filtration at the \emph{enclosing radius}. BATS(c,b,enc) uses clearing, forms the basis $V$ and stops at the \emph{enclosing radius}. BATS(u,c,enc) updates the $RU$ decomposition from the cohomology clearing algorithm with two filtration both stop at \emph{enclosing radius}. BATS(u,c,full) updates the $RU$ decomposition with two filtration both stop at infinite radius, where insertion and deletion are not involved in our updating scheme.}
    \label{tab:Rips Experiment}.  
\end{table}

As suggested in \cite{Ripser19}, we found the great efficiency of the cohomology clearing algorithm and so only tested the performance of the cohomology update \Cref{alg:gen_update_cohomology}. In \Cref{tab:Rips Experiment}, each row represents an algorithm in BATS or in another package and each column records the time spent on recomputing or updating persistent homology on a data set. The first four algorithms are all implemented in BATS. BATS(c,enc) uses clearing, does not form the basis $V$ and stops the filtration at the \emph{enclosing radius}. BATS(c,b,enc) uses clearing, forms the basis $V$ and stops at the \emph{enclosing radius}. BATS(u,c,enc) updates the $RU$ decomposition from the cohomology clearing algorithm with two filtration both stop at \emph{enclosing radius}. BATS(u,c,full) updates the $RU$ decomposition with two filtration both stop at infinite radius, where insertion and deletion are not involved in our updating scheme. We make the last two comparisons to see how addition/deletion and permutation will affect the updating performance. The final two rows are GUDHI \cite{GUDHI15} and Ripser \cite{Ripser19} and both of them use cohomology to compute persistent homology. 

The results in \Cref{tab:Rips Experiment} show that our updating algorithm \Cref{alg:gen_update_cohomology} is better than recomputing except for data set \textbf{Dragon}. On \textbf{Sphere2}, our update scheme BATS(u,c,full) demonstrates a noticeable improvement in run time. However, for \textbf{Dragon}, we suspect insertion involved in update is the main bottleneck of its inefficiency, because a new row $r$ in $R$ at line 9 \Cref{alg:gen_update_cohomology} is a product of a row in $D'_r$ and the matrix $V$, which can be a big source of overhead.
For the comparisons between other TDA packages, Ripser demonstrates a large performance advantage over other options, but we note it is specifically optimized for Rips Filtrations.

\subsection{Permutations, Additions, Deletions}

Because the complexity of \Cref{alg:gen_update} and  \Cref{alg:gen_update_cohomology} depends on the size of permutation, insertion and deletion, we explicitly investigate the effect of them by modifying the maximum radius of Rips filtration. We use the data set \textbf{Eight}. To analyze them separately, for permutation, we set the maximum radius to be infinite and change the level of noise; for insertion, we first compute the persistent homology with maximum radius set to be zero and then update persistent on increasing maximum radius to infinity; for deletion, we first compute the persistent homology with maximum radius set to be infinity and then update persistent on decreasing maximum radius to zero. 

In \Cref{fig:Perms Adds Dels}, we see
the linear relation of update time  between the size of insertion and deletion, while update time grow exponentially on the size of permutation measured by the Kendall--tau distance. 

\begin{figure}[h]
\begin{subfigure}{.35\textwidth}
  \centering
  \includegraphics[width=1\linewidth]{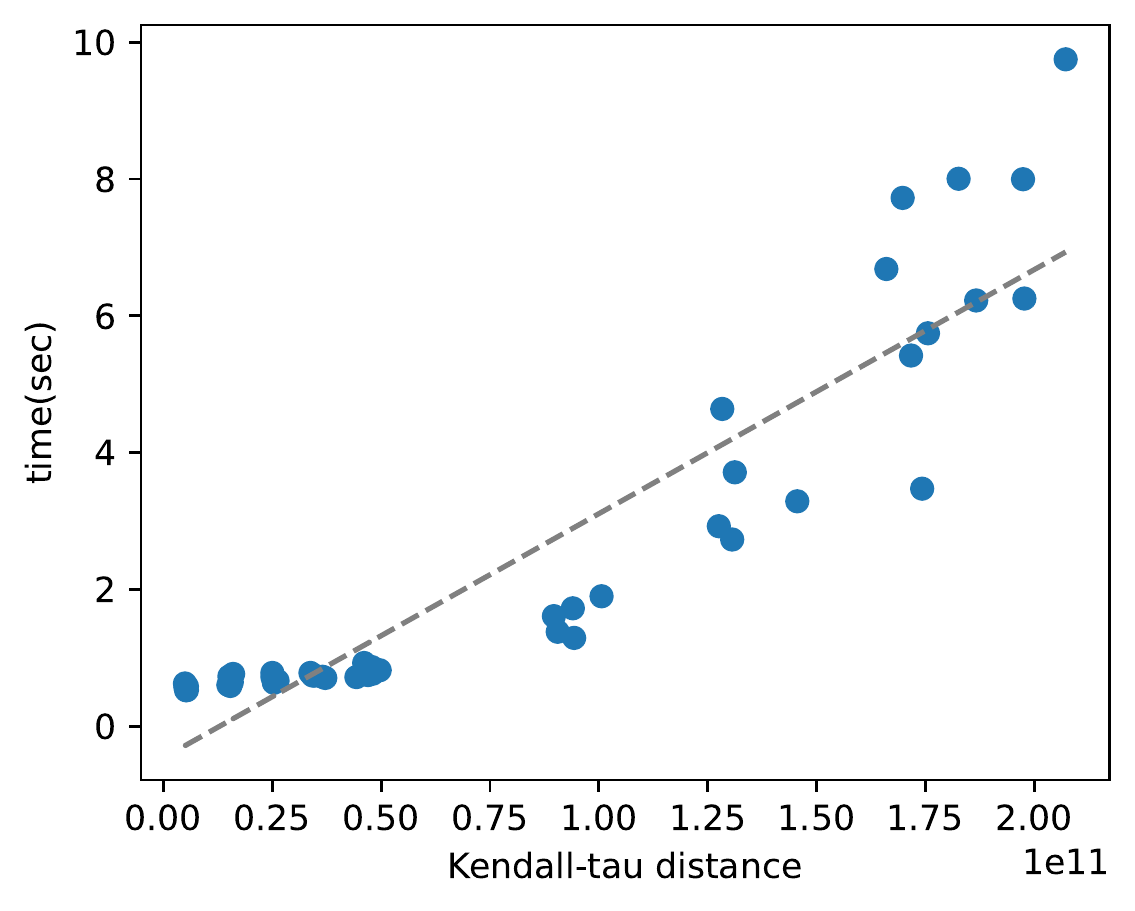}
  \caption{}
  \label{fig:subfig rips perms}
\end{subfigure}%
\begin{subfigure}{.35\textwidth}
  \centering
  \includegraphics[width=1\linewidth]{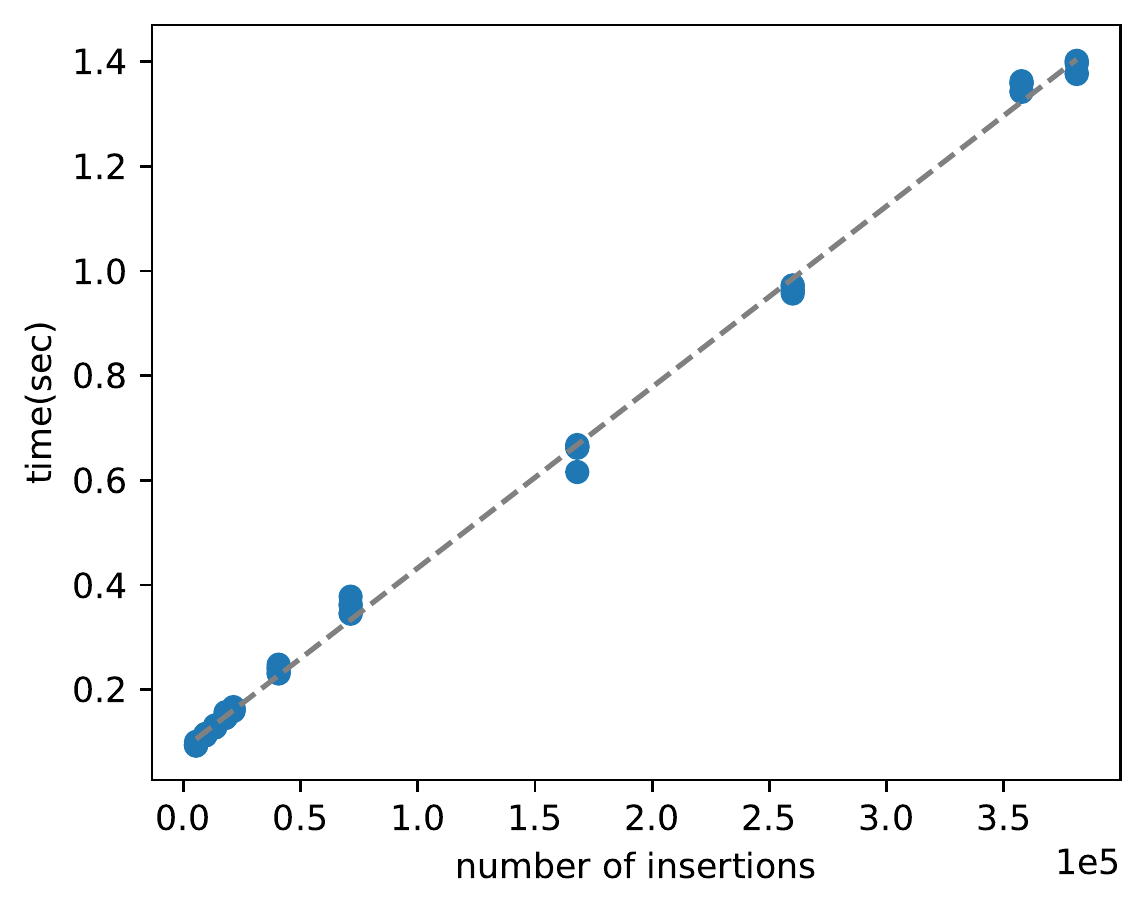}
  \caption{}
  \label{fig:subfig rips inserts}
\end{subfigure}%
\begin{subfigure}{.35\textwidth}
  \centering
  \includegraphics[width=1\linewidth]{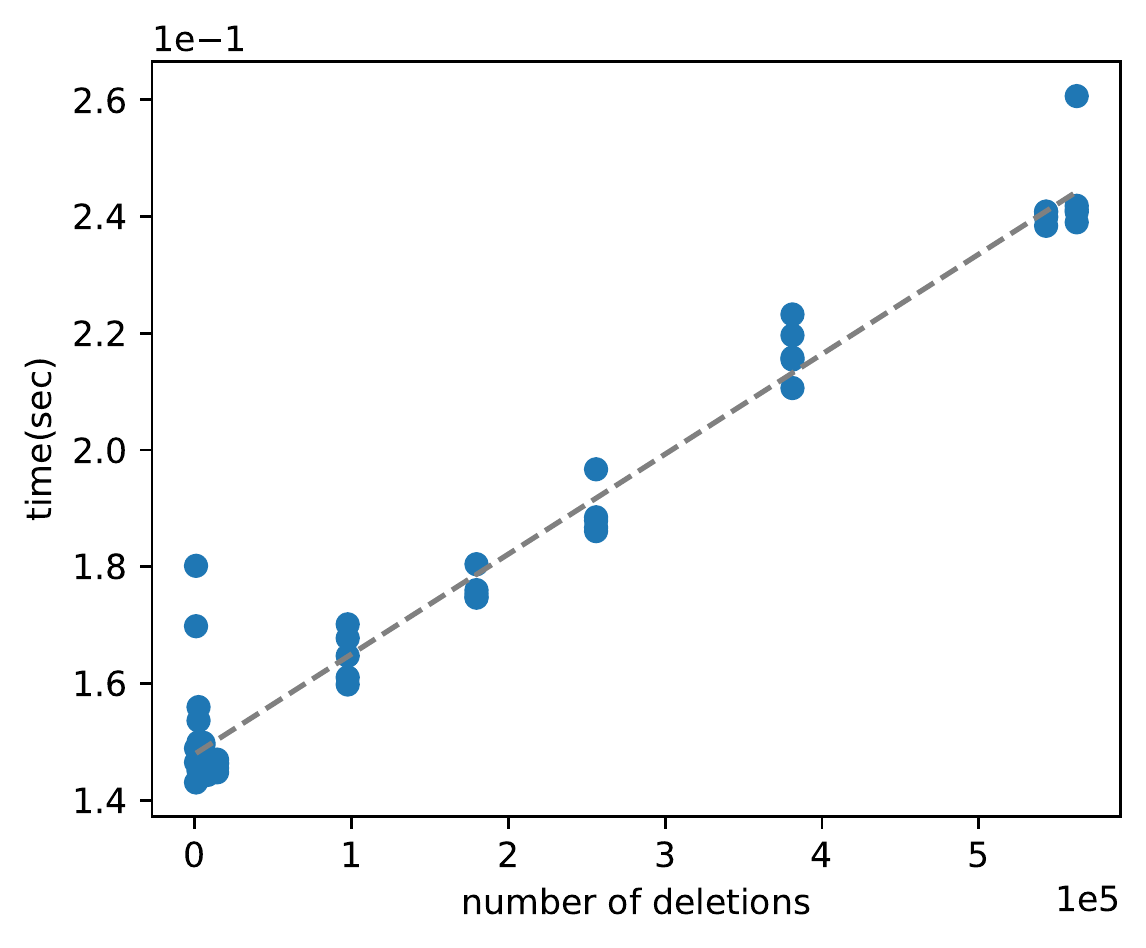}
  \caption{}
  \label{fig:subfig rips dels}
\end{subfigure}%
\caption{
Time of Cohomology Update with clearing on Rips filtration of data set \textbf{Eight} versus (i) Kendall--tau distance of permutations performed in update (\cref{fig:subfig rips perms}), where the maximum distance is 
$2.07 \times 10^{11}$;
(ii) number of insertions (\cref{fig:subfig rips inserts}); (iii) number of deletions (\cref{fig:subfig rips dels}). The dashed grey lines are linear regression lines fit by least squares.}
\label{fig:Perms Adds Dels}
\end{figure}

\subsection{Optimization}
Optimization of a function of persistent homology is another potential application for \Cref{alg:perm_update,alg:gen_update}. Theory of designing  differentiable persistence-based functions and their differential structures are discussed in \cite{Poulenard2018TopologicalFO, Solomon2021AFA, leygonie2021framework}.
We adopt the function defined in  \cite{topologyLayerMachine2020} for the following experiment, 
\begin{equation}
\mathcal{E}\left(p, q, i_{0} ; \mathrm{PD}_{k}\right)=\sum_{i=i_{0}}^{\left|I_{k}\right|}\left|d_{i}-b_{i}\right|^{p}\left(\frac{d_{i}+b_{i}}{2}\right)^{q},
\end{equation}  
where birth $b_i$ and death $d_i$ is a persistence pair of persistence diagram $\mathrm{PD}_{k}$ of Rips filtration at dimension $k$ starting at the $i_0$ longest persistence pair to those of shorter length. We maximize the sum of the lengths of 1-dimensional persistence bars starting from a point cloud sampled uniformly from the unit square.  Explicitly, we use gradient descent to maximize the function $\mathcal{E}\left(2, 0, 1 ; \mathrm{PD}_{1}\right) = \sum_{i = 1}^{I_1} (d_i - b_i)^2 $, where $I_1$ is the number of 1-dimensional persistence pairs $(b_i, d_i)$. As shown in \Cref{fig:rips_opt}, after 100 iterations, points that are originally uniformly generated in the unit square are moved to form more holes. 

In \Cref{tab:Opt rips}, we report on the experiment result. The first two columns records the time of Cohomology clearing algorithm by recomputing and updating, which is much faster than the final two columns by Homology. Despite that, we can observe that for homology algorithm, our updating scheme can achieve a great speedup. We also suspect that the bottleneck of our Cohomology update is insertion of new simplices, because each new row in $R$ requires a vector-matrix multiplication. 

\begin{table}[h]
    \centering
    \begin{tabular}{|c|c|c|c|}
    \hline
    	 Cohomology Clearing & Cohomology Update & Homology Clearing & Homology Update\\
    \hline 
    \textbf{1.30} & 1.73 & 20.78 & 6.96\\
 	\hline
    \end{tabular}
    \caption{Time of 100 iterations to maximize $\max \sum_{i = 1}^{I_1} (d_i - b_i)^2$ from a uniformly generated data set by 4 algorithms implemented in BATS, where $(b_i, d_i)$ are persistence pairs of the Rips filtration. The 4 algorithms are Cohomology with clearing, update with Cohomology (\Cref{alg:gen_update_cohomology}), Homology with clearing and update with Homology(\Cref{alg:gen_update}). The optimization results are shown in \Cref{fig:rips_opt}.}
    \label{tab:Opt rips}
\end{table}

\begin{figure*}[ht!]
   \subfloat[\label{genworkflow}]{%
      \includegraphics[ width=0.4\textwidth]{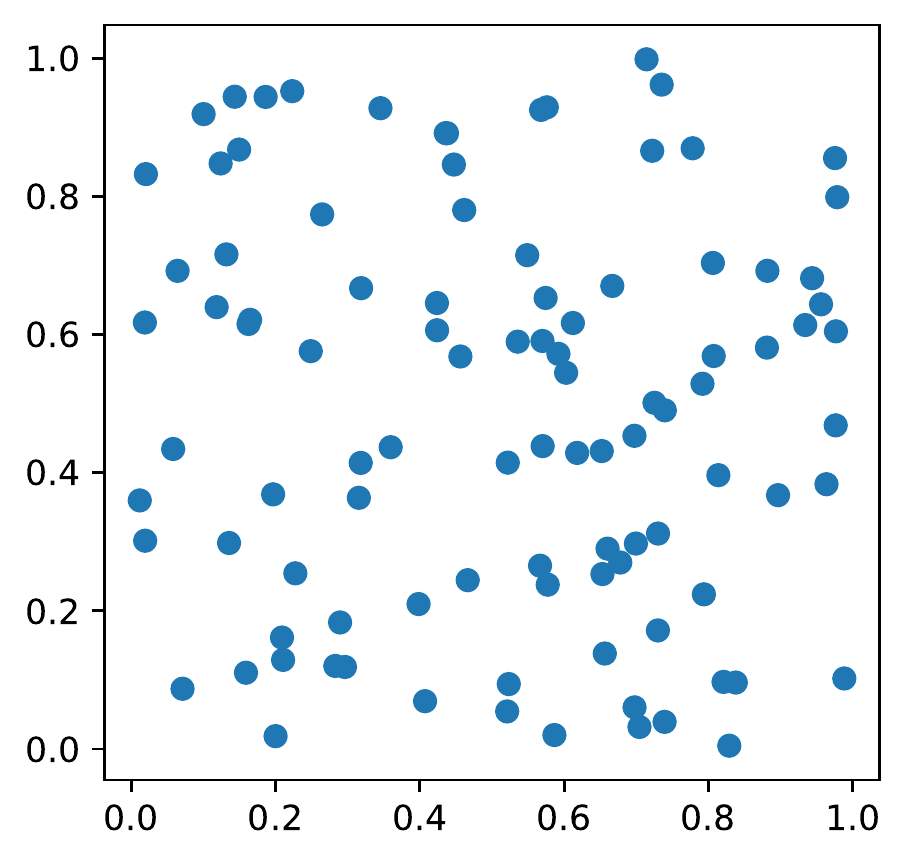}}
\hspace{\fill}
   \subfloat[\label{pyramidprocess} ]{%
      \includegraphics[ width=0.4\textwidth]{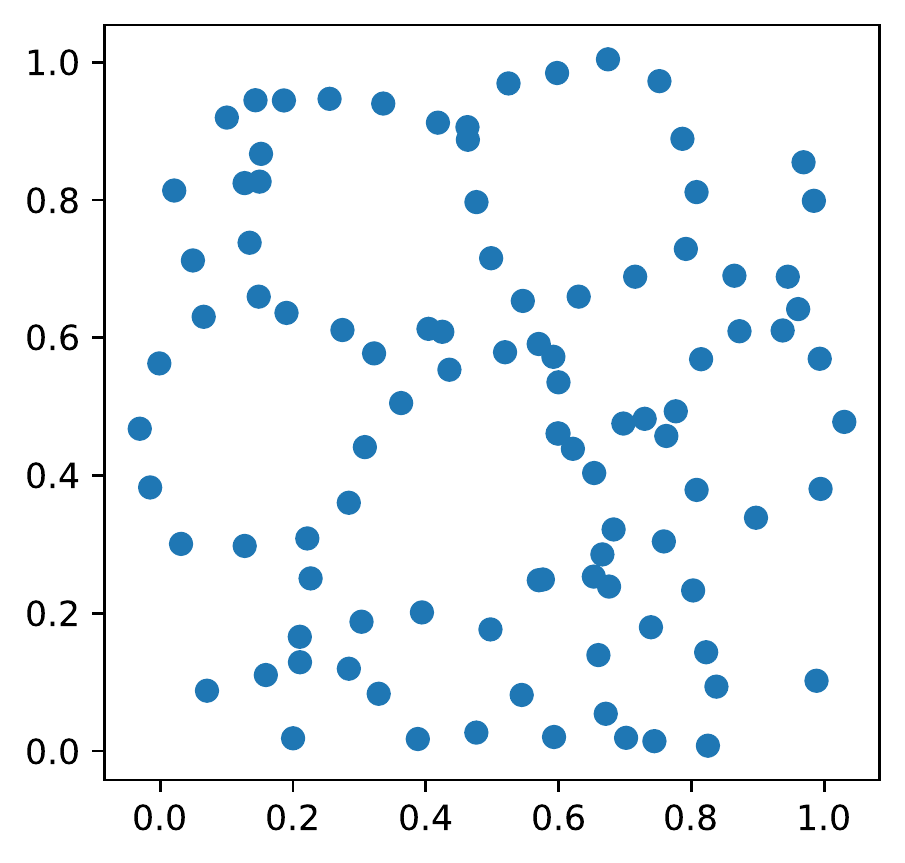}}\\
\caption{Results of 100 iterations to maximize the square length of persistent pairs of Rips filtration at dimension one. We used Cohomology update to transform a uniformly generated dataset Figure (a) to Figure (b).}
\label{fig:rips_opt}
\end{figure*}

\section{Conclusion}

We present two algorithms for updating persistence: one for a fixed-sized filtration and another for a general filtration. The algorithms' asymptotic complexity is shown to be comparable to the standard reduction algorithm for computing persistent homology in the worst case, and we provide tighter bounds based on the details of the updates. Our algorithm demonstrates practical speedups on several experiments, especially where changes to the filtration are limited. 
We implemented our method using the data structures in the Basic Applied Topology Subprograms (BATS) library \cite{BATS}, in order to obtain consistent comparisons with several variations of the reduction algorithm for persistent homology. We see that our update method can provide a speedup of 2-3x in several practical situations. 

While we have demonstrated the utility of our approach in certain situations, there are also some limitations to its use. Some of these are inherent, for instance our approach does not work well when filtrations change too drastically, or when the additional memory requirements of maintaining the matrix $V$ are cost prohibitive. Other limitations may be implementation-specific, for instance we see that Gudhi \cite{GUDHI15} and Ripser \cite{Ripser19} outperform our update scheme on Vietoris--Rips computations.  

Deciding which algorithm to use for computing persistent homology on many similar problems is context-dependent.  For fixed size filtrations, as in level set persistence, using our update scheme appears to be a reasonable choice for smaller perturbations, particularly when maintaining the basis matrix $V$ is desirable.  For geometric filtrations, we recommend using a high-performance package designed for these computations, particularly if the homology basis is not required. In practice, a practitioner may wish to test several options experimentally as run times can be problem dependent.

There are several directions for future investigation which may build on this work.  One direction is to develop methods to limit fill-in in the $RU$ decomposition when performing updates, a problem related to that of finding sparse homology generators \cite{obayashiVolumeOptimalCycleTightest2018a}.  As we have discussed, this appears to be an important consideration in several potential applications of our update schemes such as optimization using level set filtrations.  
There may also be ways to adapt our methods to the context of updating discrete Morse vector fields \cite{mischaikowMorseTheoryFiltrations2013}, which may offer another way to accelerate iterated persistent homology computations.  Finally, because we use the standard reduction algorithm as a black box, we suspect that the application of blocked or parallel methods \cite{bauer2013DistrubutedComputationofPH, zhang2020gpu, Phat2017, HYPHA19} offers a path to improve on the performance seen in our experiments.

\section*{Acknowledgements:} BN was supported by the Defense Advanced Research Projects Agency (DARPA) under Agreement No.
HR00112190040.  We are grateful for compute resources provided by the Research Computing Center (RCC) at the University of Chicago.

\bibliographystyle{acm}      % mathematics and physical sciences
\bibliography{references}   % name your BibTeX data base

\begin{thebibliography}{10}

\bibitem{Stanford3D}
{The Stanford 3D Scanning Repository}.
\newblock \url{http://graphics.stanford.edu/data/3Dscanrep/}.
\newblock Accessed: 2021-07-23.

\bibitem{volvis}
Volvis repository (archived).
\newblock \url{https://web.archive.org/web/20150307144939/http://volvis.org/}.
\newblock Accessed: 2021-07-23.

\bibitem{Asaad2017TDA_Image_Tampering}
{\sc Asaad, A., and Jassim, S.}
\newblock Topological data analysis for image tampering detection.
\newblock In {\em Digital Forensics and Watermarking\/} (Cham, 2017),
  C.~Kraetzer, Y.-Q. Shi, J.~Dittmann, and H.~J. Kim, Eds., Springer
  International Publishing, pp.~136--146.

\bibitem{Bae2017BeyondDR}
{\sc Bae, W., Yoo, J., and Ye, J.~C.}
\newblock Beyond deep residual learning for image restoration: Persistent
  homology-guided manifold simplification.
\newblock {\em 2017 IEEE Conference on Computer Vision and Pattern Recognition
  Workshops (CVPRW)\/} (2017), 1141--1149.

\bibitem{Ripser19}
{\sc Bauer, U.}
\newblock Ripser: efficient computation of vietoris–rips persistence
  barcodes.
\newblock {\em Journal of Applied and Computational Topology\/} (2021).

\bibitem{bauer2013DistrubutedComputationofPH}
{\sc Bauer, U., Kerber, M., and Reininghaus, J.}
\newblock Distributed computation of persistent homology.
\newblock {\em 2014 Proceedings of the Sixteenth Workshop on Algorithm
  Engineering and Experiments (ALENEX)\/} (2013), 31–38.

\bibitem{bauerClearCompressComputing2014}
{\sc Bauer, U., Kerber, M., and Reininghaus, J.}
\newblock Clear and compress: Computing persistent homology in chunks.
\newblock In {\em Topological Methods in Data Analysis and Visualization
  {III}}, P.-T. Bremer, I.~Hotz, V.~Pascucci, and R.~Peikert, Eds. Springer
  International Publishing, 2014, pp.~103--117.

\bibitem{Phat2017}
{\sc Bauer, U., Kerber, M., Reininghaus, J., and Wagner, H.}
\newblock Phat – persistent homology algorithms toolbox.
\newblock {\em Journal of Symbolic Computation 78\/} (2017), 76--90.

\bibitem{boissonnatSimplexTreeEfficient2014a}
{\sc Boissonnat, J.-D., and Maria, C.}
\newblock The {{Simplex Tree}}: {{An Efficient Data Structure}} for {{General
  Simplicial Complexes}}.
\newblock {\em Algorithmica 70}, 3 (Nov. 2014), 406--427.

\bibitem{rickardCNN2019}
{\sc Brüel~Gabrielsson, R., and Carlsson, G.}
\newblock Exposition and interpretation of the topology of neural networks.
\newblock In {\em 2019 18th {IEEE} International Conference On Machine Learning
  And Applications ({ICMLA})\/} (2019), pp.~1069--1076.

\bibitem{topologyLayerMachine2020}
{\sc Brüel-Gabrielsson, R., Nelson, B.~J., Dwaraknath, A., Skraba, P., Guibas,
  L.~J., and Carlsson, G.}
\newblock A topology layer for machine learning.
\newblock In {\em The 23rd International Conference on Artificial Intelligence
  and Statistics ({AISTATS})\/} (2020).

\bibitem{Cang2018IntegrationOE}
{\sc Cang, Z., and Wei, G.}
\newblock Integration of element specific persistent homology and machine
  learning for protein-ligand binding affinity prediction.
\newblock {\em International journal for numerical methods in biomedical
  engineering 34 2\/} (2018).

\bibitem{cangIntegrationElementSpecific2018}
{\sc Cang, Z., and Wei, G.-W.}
\newblock Integration of element specific persistent homology and machine
  learning for protein-ligand binding affinity prediction.
\newblock {\em International Journal for Numerical Methods in Biomedical
  Engineering 34}, 2 (2018).

\bibitem{carlssonTopologyData2009}
{\sc Carlsson, G.}
\newblock Topology and data.
\newblock {\em Bulletin of the American Mathematical Society 46}, 2 (2009),
  255--308.

\bibitem{ZZtheory2010}
{\sc Carlsson, G., and de~Silva, V.}
\newblock Zigzag persistence.
\newblock {\em Foundations of Computational Mathematics 10}, 4 (2010),
  367--405.

\bibitem{factorizationView2019}
{\sc Carlsson, G., Dwaraknath, A., and Nelson, B.~J.}
\newblock {Persistent and Zigzag Homology: A Matrix Factorization Viewpoint}.
\newblock Preprint: \url{https://arxiv.org/abs/1911.10693}, 2019.

\bibitem{BATS}
{\sc Carlsson, G., Dwaraknath, A., and Nelson, B.~J.}
\newblock {Persistent and Zigzag Homology: A Matrix Factorization Viewpoint}.
\newblock Preprint: \url{https://arxiv.org/abs/1911.10693}, 2019.

\bibitem{carrierePersLayNeuralNetwork2020}
{\sc Carrière, M., Chazal, F., Ike, Y., Lacombe, T., Royer, M., and Umeda, Y.}
\newblock {PersLay}: A neural network layer for persistence diagrams and new
  graph topological signatures.
\newblock Preprint: \url{http://arxiv.org/abs/1904.09378}, 2020.

\bibitem{chenPersistentHomologyComputation2011}
{\sc Chen, C., and Kerber, M.}
\newblock Persistent homology computation with a twist.
\newblock In {\em 27th European Workshop on Computational Geometry\/} (2011),
  pp.~197--200.

\bibitem{chenTopologicalRegularizerClassifiers2018}
{\sc Chen, C., Ni, X., Bai, Q., and Wang, Y.}
\newblock A topological regularizer for classifiers via persistent homology.
\newblock In {\em The 22nd International Conference on Artificial Intelligence
  and Statistics ({AISTATS})\/} (2019).

\bibitem{vinesvineyards06}
{\sc Cohen-Steiner, D., Edelsbrunner, H., and Morozov, D.}
\newblock Vines and vineyards by updating persistence in linear time.
\newblock In {\em Proceedings of the Twenty-Second Annual Symposium on
  Computational Geometry\/} (New York, NY, USA, 2006), SCG '06, Association for
  Computing Machinery, p.~119–126.

\bibitem{desilvaDualitiesPersistentCo2011}
{\sc de~Silva, V., Morozov, D., and Vejdemo-Johansson, M.}
\newblock Dualities in persistent (co)homology.
\newblock {\em Inverse Problems 27}, 12 (2011), 124003.

\bibitem{Dey2017ImprovedIC}
{\sc Dey, T., Mandal, S., and Varcho, W.}
\newblock Improved image classification using topological persistence.
\newblock In {\em VMV\/} (2017).

\bibitem{diaconisGroupRepresentationsProbability1988}
{\sc Diaconis, P.}
\newblock Group {{Representations}} in {{Probability}} and {{Statistics}}.
\newblock {\em Lecture Notes-Monograph Series 11\/} (1988), i--192.

\bibitem{edelsbrunnerHarerBook2010}
{\sc Edelsbrunner, H., and Harer, J.}
\newblock {\em Computational Topology - an Introduction.}
\newblock American Mathematical Society, 2010.

\bibitem{edelsbrunner2000topological}
{\sc Edelsbrunner, H., Letscher, D., and Zomorodian, A.}
\newblock Topological persistence and simplification.
\newblock In {\em Foundations of Computer Science, 2000. Proceedings. 41st
  Annual Symposium on\/} (2000), IEEE, pp.~454--463.

\bibitem{gaoRepositioning8565Existing2020}
{\sc Gao, K., Nguyen, D., Chen, J., Wang, R., and Wei, G.}
\newblock Repositioning of 8565 existing drugs for covid-19.
\newblock {\em The Journal of Physical Chemistry Letters\/} (06 2020).

\bibitem{garin2019topological_Classification_of_MNIST}
{\sc Garin, A., and Tauzin, G.}
\newblock A topological "reading" lesson: Classification of mnist using tda,
  2019.

\bibitem{SNOPT2005}
{\sc Gill, P.~E., Murray, W., and Saunders, M.~A.}
\newblock {SNOPT}: An {SQP} algorithm for large-scale constrained optimization.
\newblock {\em {SIAM} Review 47}, 1 (2005), 99--131.

\bibitem{gill1987}
{\sc Gill, P.~E., Murray, W., Saunders, M.~A., and Wright, M.~H.}
\newblock Maintaining {LU} factors of a general sparse matrix.
\newblock {\em Linear Algebra and its Applications 88-89\/} (1987), 239--270.

\bibitem{giustiTwoCompanyThree2016}
{\sc Giusti, C., Ghrist, R., and Bassett, D.~S.}
\newblock Two’s company, three (or more) is a simplex: Algebraic-topological
  tools for understanding higher-order structure in neural data.
\newblock {\em Journal of Computational Neuroscience 41}, 1 (2016), 1--14.

\bibitem{Eirene16}
{\sc {Henselman}, G., and {Ghrist}, R.}
\newblock {Matroid Filtrations and Computational Persistent Homology}.
\newblock Preprint: \url{https://arxiv.org/abs/1606.00199}, 2016.

\bibitem{kimEfficientTopologicalLayer2020}
{\sc Kim, K., Kim, J., Kim, J.~S., Chazal, F., and Wasserman, L.}
\newblock Efficient topological layer based on persistent landscapes.
\newblock Preprint: \url{http://arxiv.org/abs/2002.02778}, 2020.

\bibitem{lecun-mnisthandwrittendigit-2010}
{\sc LeCun, Y., and Cortes, C.}
\newblock {MNIST} handwritten digit database.

\bibitem{leygonieFrameworkDifferentialCalculus2019}
{\sc Leygonie, J., Oudot, S., and Tillmann, U.}
\newblock A framework for differential calculus on persistence barcodes.
\newblock {\em Foundations of Computational Mathematics\/} (07 2021), 1--63.

\bibitem{leygonie2021framework}
{\sc Leygonie, J., Oudot, S., and Tillmann, U.}
\newblock A framework for differential calculus on persistence barcodes.
\newblock {\em Foundations of Computational Mathematics\/} (2021), 1--63.

\bibitem{GUDHI15}
{\sc Maria, C., Boissonnat, J.-D., Glisse, M., and Yvinec, M.}
\newblock The gudhi library: Simplicial complexes and persistent homology.
\newblock In {\em Mathematical Software – {ICMS} 2014\/} (2014), H.~Hong and
  C.~Yap, Eds., Lecture Notes in Computer Science, Springer, pp.~167--174.

\bibitem{ZZmatmultime2011}
{\sc Milosavljević, N., Morozov, D., and Skraba, P.}
\newblock Zigzag persistent homology in matrix multiplication time.
\newblock In {\em Proceedings of the 27th annual {ACM} symposium on
  Computational geometry - {SoCG} '11\/} (2011), {ACM} Press, p.~216.

\bibitem{mischaikowMorseTheoryFiltrations2013}
{\sc Mischaikow, K., and Nanda, V.}
\newblock Morse theory for filtrations and efficient computation of persistent
  homology.
\newblock {\em Discrete \& Computational Geometry 50}, 2 (2013), 330--353.

\bibitem{Dionysus2}
{\sc Morozov, D.}
\newblock Dionysus2.
\newblock Software available at \url{https://www.mrzv.org/software/dionysus2/}.

\bibitem{obayashiVolumeOptimalCycleTightest2018a}
{\sc Obayashi, I.}
\newblock Volume-{{Optimal Cycle}}: {{Tightest Representative Cycle}} of a
  {{Generator}} in {{Persistent Homology}}.
\newblock {\em SIAM Journal on Applied Algebra and Geometry 2}, 4 (Jan. 2018),
  508--534.

\bibitem{otterRoadmapComputationPersistent2017}
{\sc Otter, N., Porter, M.~A., Tillmann, U., Grindrod, P., and Harrington,
  H.~A.}
\newblock A roadmap for the computation of persistent homology.
\newblock {\em {EPJ} Data Science 6}, 1 (2017).

\bibitem{Oudot}
{\sc Oudot, S.~Y.}
\newblock {\em Persistence Theory: From Quiver Representations to Data
  Analysis}, vol.~209 of {\em Mathematical Surveys and Monographs}.
\newblock American Mathematical Society, 2015.

\bibitem{Poulenard2018TopologicalFO}
{\sc Poulenard, A., Skraba, P., and Ovsjanikov, M.}
\newblock Topological function optimization for continuous shape matching.
\newblock {\em Computer Graphics Forum 37\/} (2018).

\bibitem{Qaiser2019FastAA}
{\sc Qaiser, T., Tsang, Y.-W., Taniyama, D., Sakamoto, N., Nakane, K., Epstein,
  D., and Rajpoot, N.}
\newblock Fast and accurate tumor segmentation of histology images using
  persistent homology and deep convolutional features.
\newblock {\em Medical image analysis 55\/} (2019), 1--14.

\bibitem{reid1982}
{\sc Reid, J.~K.}
\newblock A sparsity-exploiting variant of the bartels—golub decomposition
  for linear programming bases.
\newblock {\em Mathematical Programming 24}, 1 (1982), 55--69.

\bibitem{saundersLUSOLSparseLU}
{\sc Saunders, M.}
\newblock {LUSOL}: Sparse {LU} factorization and updating for {Ax = b}.
\newblock Available online:
  \url{https://stanford.edu/group/SOL/software/lusol/}, 2008.

\bibitem{Solomon2021AFA}
{\sc Solomon, E., Wagner, A., and Bendich, P.}
\newblock A fast and robust method for global topological functional
  optimization.
\newblock In {\em AISTATS\/} (2021).

\bibitem{HYPHA19}
{\sc Zhang, S., Xiao, M., Guo, C., Geng, L., Wang, H., and Zhang, X.}
\newblock {HYPHA}: a framework based on separation of parallelisms to
  accelerate persistent homology matrix reduction.
\newblock In {\em Proceedings of the {ACM} International Conference on
  Supercomputing - {ICS} '19\/} (2019), {ACM} Press, pp.~69--81.

\bibitem{zhang2020gpu}
{\sc Zhang, S., Xiao, M., and Wang, H.}
\newblock Gpu-accelerated computation of vietoris-rips persistence barcodes.
\newblock {\em arXiv preprint arXiv:2003.07989\/} (2020).

\bibitem{ZCComputingPH2005}
{\sc Zomorodian, A., and Carlsson, G.}
\newblock Computing persistent homology.
\newblock {\em Discrete \& Computational Geometry 33}, 2 (2005), 249--274.

\end{thebibliography}

\end{document}